\newtheorem{thm}{Theorem}[section]
\newtheorem{cor}[thm]{Corollary}
\newtheorem{lem}[thm]{Lemma}
\newtheorem{prop}[thm]{Proposition}
\newtheorem{rem}[thm]{Remark}
\numberwithin{equation}{section}
\newcommand{\Mod}{\text{\boldmath{$\mathsf{M}$}}}
\begin{document}
\title{\bf \Large Reconstruction of weak bialgebra maps and its applications}
\author{
{Michihisa Wakui}\\ 
{\footnotesize  Department of Mathematics, Faculty of Engineering Science,}\\ 
{\footnotesize  Kansai University, Suita-shi, Osaka 564-8680, Japan}\\
{\footnotesize  E-mail address: wakui@kansai-u.ac.jp}\\ 
}
\date{}

\maketitle 
\begin{abstract}
In this note we give a precise statement and a detailed proof for reconstruction problem of weak bialgebra maps. 
As an application we characterize indecomposability of weak algebras in categorical setting. 
\end{abstract}

\baselineskip 15pt 

\section{Introduction}
\par 
The concept of weak bialgebras and weak Hopf algebras was introduced by B\"{o}hm, Nill and Szlach\'{a}nyi \cite{BNS} as a generalization of bialgebras and Hopf algebras. 
In some literatures they are called quantum groupoids \cite{NTV}. 
As shown by Etingof, Nikshych and Ostrik \cite{ENO} the language of weak Hopf algebras is convenient to visualize various categorical construction, including (multi-)fusion categories. 
At present, a lot of concepts and results for the ordinal bialgebras and Hopf algebras are generalized or extended to weak versions. 
\par 
By the classical Tannaka-Krein reconstruction theorem, 
it is well known that any finite-dimensional bialgebra $A$ over a fixed field $\boldsymbol{k}$ is determined up to isomorphism by its comodule category $\mathbb{M}^A$ whose objects are of finite dimension \cite{Deligne, Majid, Schauen_Tannakian, Ulbrich}. 
More precisely, let $A$ and $B$ be two finite-dimensional bialgebras over $\boldsymbol{k}$, and 
$F : \mathbb{M}^A \longrightarrow \mathbb{M}^B$ be a $\boldsymbol{k}$-linear monoidal functor. 
If $F$ is fibered, then 
there is a bialgebra map $\varphi : A\longrightarrow B$ such that $F=\mathbb{M}^{\varphi}$, where $\mathbb{M}^{\varphi}$ is the induced monoidal functor from $f$. 
This statement can be found in Majid's book \cite[Theorem 2.2]{Majid_book} and a detailed proof is given in Franco's lecture note \cite[p.80--84]{Franco}. 
\par 
In this note we treat the weak bialgebra version of the above result. It is noted that 
as is well known, the reconstruction theorem for weak bialgebras has been established by several researchers \cite{Chikhladze, H-Oldenburg, McCrudy, BV} (see \cite{Vercruysse} for other variants). 
However, it seems that there is no statement on a reconstruction theorem for ``weak bialgebra map" in any papers 
though it is very fundamental. 
This note is devoted to give a precise statement of it and prove it. 
In fact, although the same statement with the classical one holds, the proof is a rather complicated since 
the unit object in the comodule category over a weak bialgebra $A$ is not necessary to the base field $\boldsymbol{k}$. 
Actually, it is a subalgebra, which is called the source counital subalgebra of $A$. 
For that reason, the proof of the reconstruction theorem for weak bialgebra maps is accomplished by re-examining the method used in the proof of the reconstruction of coalgebra map.  
\par 
Recently, the author study on indecomposability of weak bialgebras, and find some interesting results and unsolved problems \cite{Wakui_ContempMath}. 
As an application of the reconstruction theorem of weak bialgebra maps, 
we derive a categorical interpretation of indecomposability of a weak bialgebra. 
\par 
This paper is organized as follows. 
In Section 2 we recall the definition and basic properties of weak bialgebras, and also the comodule structure over them. 
In Section 3 after we overview the proof of the reconstruction theorem of coalgebra maps, we state and prove the reconstruction theorem of bialgebra maps. 
In Section 4, that is the final section, we apply the theorem to characterize indecomposability of finite-dimensional weak bialgebras. 
\par 
Throughout this note, $\boldsymbol{k}$ denotes a field, and 
$\text{Vect}_{\boldsymbol{k}}^{\text{f.d.}}$ stands for the monoidal category whose objects are finite-dimensional vector spaces over $\boldsymbol{k}$ and morphisms are $\boldsymbol{k}$-linear maps between them. 
For a weak bialgebra or a weak Hopf algebra $H$, we denote by $\Delta _H$, $\varepsilon _H$ and $S_H$ the comultiplication, the counit and the antipode of $H$, respectively. 
When it is clear that they are for $H$, they are simply denoted by $\Delta $, $\varepsilon $ and $S$, respectively. 
The notation $\Delta ^{(2)}$ means the composition $(\Delta \otimes \text{id})\circ \Delta =(\text{id}\otimes \Delta )\circ \Delta $. 
We use Sweedler's notation such as $\Delta (x)=x_{(1)}\otimes x_{(2)}$ for $x\in H$, and 
for a right $H$-comodule $M$ with coaction $\rho $ we also use the notation $\rho (m)=m_{(0)}\otimes m_{(1)}$ for $m\in M$. 
\par 
A monoidal category is described as $\mathcal{C}=(\mathcal{C}, \otimes , I, a, l, r)$, as in MacLane's book \cite{MacLane_book}, 
and a monoidal functor between monoidal categories $\mathcal{C}$ and $\mathcal{D}$ is described as $F=(F, \Phi ^F, \omega ^F)$, 
where $F$ is a covariant functor from $\mathcal{C}$ to $\mathcal{D}$, and $\Phi ^F$ is a natural transformation obtained by collecting morphisms $\phi^F_{M, N}: F(M)\otimes F(N) \longrightarrow F(M\otimes N)$ for all $M, N\in \mathcal{C}$, 
and $\omega ^F : F(I_{\mathcal{C}})\longrightarrow I_{\mathcal{D}}$ is a morphism, 
where $I_{\mathcal{C}}$ and $I_{\mathcal{D}}$ are the unit objects in $\mathcal{C}$, and $\mathcal{D}$, respectively. 
If $\Phi^F$ is a natural equivalence and $\omega ^F$ is an isomorphism, then the monoidal functor $(F, \Phi ^F, \omega ^F)$ is called strong. 
As in the classical case, every weak bialgebra map $\varphi : H\longrightarrow K$ induces a covariant functor $\mathbb{M}^{\varphi} : \mathbb{M}^H \longrightarrow \mathbb{M}^K$. 
We note that the functor $\mathbb{M}^{\varphi}$ is not monoidal but is comonoidal. 
By a comonoidal functor we mean a triplet $F=(F, \bar{\Phi }^F, \bar{\omega} ^F)$, which all arrows in $\bar{\Phi }^F$ and $\bar{\omega} ^F$ are in reverse in monoidal categories, 
that is, $\bar{\Phi }^F$ is a natural transformation obtained by collecting morphisms $\bar{\phi}^F_{M, N}: F(M\otimes N) \longrightarrow F(M)\otimes F(N)$ for all $M, N\in \mathcal{C}$, and $\bar{\omega} ^F : I_{\mathcal{D}}\longrightarrow F(I_{\mathcal{C}})$ is a morphism. 
By the same condition for a monoidal functor, the concept called strong is defined for a comonoidal functor. 
In some literatures terminologies ``colax" or ``op-monoidal" are used for ``comonoidal". 
\par 
For general facts on Hopf algebras, we refer the reader to Montgomery's book \cite{Montgomery_book}. 

\par \medskip 
\section{Definition of weak bialgebras and structures of their comodule categories}
In this section we recall the definition and basic properties of weak bialgebras, and also the comodule structure over them mainly following \cite{BNS} and \cite{NTV}. 

\par 
Let $H$ be a vector space over $\boldsymbol{k}$, and $(H, \mu , \eta )$ is an algebra and 
$(H, \Delta , \varepsilon )$ is a coalgebra over $\boldsymbol{k}$. 
The $5$-tuple $(H, \mu , \eta , \Delta , \varepsilon )$ is said to be a \textit{weak bialgebra} over $\boldsymbol{k}$ if the following three conditions are satisfied. 
\begin{enumerate}\itemindent=1cm 
\item[(WH1)] $\Delta (xy)=\Delta (x)\Delta (y)$ for all $x, y\in H$.
\item[(WH2)] $\Delta ^{(2)}(1)=(\Delta (1)\otimes 1)(1\otimes \Delta (1))=
(1\otimes \Delta (1))(\Delta (1)\otimes 1)$, 
where $1=\eta (1)$ is the identity element of the algebra $(H, \mu , \eta )$. 
\item[(WH3)] For all $x,y,z\in H$
\begin{enumerate}
\item[(i)] $\varepsilon (xyz)=\varepsilon (xy_{(1)})\varepsilon (y_{(2)}z)$.
\item[(ii)] $\varepsilon (xyz)=\varepsilon (xy_{(2)})\varepsilon (y_{(1)}z)$. 
\end{enumerate}
\end{enumerate}

Let $S: H\longrightarrow H$ be a $\boldsymbol{k}$-linear map. 
The $6$-tuple $(H, \mu , \eta , \Delta , \varepsilon ,S)$ is said to be a \textit{weak Hopf algebra} over $\boldsymbol{k}$ if the above three conditions and the following additional condition are satisfied.
\begin{enumerate}\itemindent=1cm 
\item[(WH4)] For all $x\in H$
\begin{enumerate}
\item[(i)] $x_{(1)}S(x_{(2)})=\varepsilon (1_{(1)}x)1_{(2)}$. 
\item[(ii)] $S(x_{(1)})x_{(2)}=1_{(1)}\varepsilon (x1_{(2)})$.
\item[(iii)] $S(x_{(1)})x_{(2)}S(x_{(3)})=S(x)$. 
\end{enumerate}
\end{enumerate}

The above $S$ is called the antipode of $(H, \mu , \eta , \Delta , \varepsilon )$ and $(H, \mu , \eta , \Delta , \varepsilon ,S)$. 
We note that it is unique if it exists. 

\par 
For a weak bialgebra $H=(H, \mu , \eta , \Delta , \varepsilon )$, by the condition (WH4)(i),(ii), 
the following two $\boldsymbol{k}$-linear maps $\varepsilon _t, \varepsilon _s: H\longrightarrow H$ are defined: 
\begin{align}
\varepsilon_t(x)=\varepsilon (1_{(1)}x)1_{(2)},  \label{eq_target_counital_map} \\ 
\varepsilon_s(x)=1_{(1)}\varepsilon (x1_{(2)}). \label{eq_source_counital_map}
\end{align}

The maps $\varepsilon _t$ and $\varepsilon _s$ are called the \textit{target counital map} and the \textit{source counital map}, respectively. 
These maps satisfy the following properties. 

\par \medskip 
\begin{lem}\label{2-1}
\begin{enumerate}
\item[$(1)$] $\varepsilon _t^2=\varepsilon _t,\ \varepsilon _s^2=\varepsilon _s$.
\item[$(2)$] For all $x\in H$
\begin{enumerate}
\item[$(i)$] $((\mathrm{id}\otimes \varepsilon _t)\circ \Delta )(x)=1_{(1)}x\otimes 1_{(2)}$. 
\item[$(ii)$] $((\varepsilon _s\otimes \mathrm{id})\circ \Delta )(x)=1_{(1)}\otimes x1_{(2)}$. 
\end{enumerate}
In particular, 
\begin{equation}\label{eq2-3}
1_{(1)}\otimes \varepsilon _t(1_{(2)})=1_{(1)}\otimes 1_{(2)}=\varepsilon _s(1_{(1)})\otimes 1_{(2)}.
\end{equation}
\item[$(3)$] For all $x\in H$
\begin{enumerate}
\item[$(i)$] $\varepsilon _t(x)=x\ \ \Longleftrightarrow \ \ \Delta (x)=1_{(1)}x\otimes 1_{(2)}$. 
\item[$(ii)$] $\varepsilon _s(x)=x\ \ \Longleftrightarrow \ \ \Delta (x)=1_{(1)}\otimes x1_{(2)}$. 
\end{enumerate}
\end{enumerate}
Especially, by Part $(2)$$(i)$
\begin{align*}
1_{(1)}1_{[1]}\otimes 1_{(2)}\otimes 1_{[2]}&=1_{(1)}\otimes \varepsilon_t(1_{(2)})\otimes 1_{(3)},\\ 
1_{(1)}\otimes 1_{[1]}\otimes 1_{(2)}1_{[2]}&=1_{(1)}\otimes \varepsilon_s(1_{(2)})\otimes 1_{(3)}, 
\end{align*}
where 
$\Delta ^{(2)}(1)=1_{(1)}\otimes 1_{(2)}\otimes 1_{(3)}$ and $\Delta (1)=1_{[1]}\otimes 1_{[2]}$. 
\end{lem}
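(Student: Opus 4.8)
Here is the plan.

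I would first isolate, for every $x\in H$, the two ``coproduct formulas'' $\Delta(\varepsilon_t(x))=1_{(1)}\varepsilon_t(x)\otimes 1_{(2)}$ and $\Delta(\varepsilon_s(x))=1_{(1)}\otimes\varepsilon_s(x)1_{(2)}$, since these contain Part $(3)$ and yield Part $(1)$ at once. To get the $\varepsilon_t$-formula, apply $\Delta$ to $\varepsilon_t(x)=\varepsilon(1_{(1)}x)1_{(2)}$; since $\varepsilon(1_{(1)}x)$ is a scalar only $\Delta(1_{(2)})$ is affected, and coassociativity rewrites $1_{(1)}\otimes\Delta(1_{(2)})$ as $\Delta^{(2)}(1)$, giving $\Delta(\varepsilon_t(x))=\varepsilon(1_{(1)}x)1_{(2)}\otimes 1_{(3)}$ with $\Delta^{(2)}(1)=1_{(1)}\otimes 1_{(2)}\otimes 1_{(3)}$. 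Substituting the factorisation $\Delta^{(2)}(1)=(1\otimes\Delta(1))(\Delta(1)\otimes 1)$ from (WH2) makes one copy of $\Delta(1)$ appear ``outside'' and lets the rest reassemble as a residual $\varepsilon(1_{(1)}x)1_{(2)}=\varepsilon_t(x)$, which is precisely $1_{(1)}\varepsilon_t(x)\otimes 1_{(2)}$; the $\varepsilon_s$-formula is the left--right mirror. Granting these, Part $(1)$ comes by applying $\varepsilon\otimes\mathrm{id}$ (resp.\ $\mathrm{id}\otimes\varepsilon$) and using $(\varepsilon\otimes\mathrm{id})\circ\Delta=\mathrm{id}$; and Part $(3)(i)$ follows because $\Delta(x)=1_{(1)}x\otimes 1_{(2)}$ forces $x=\varepsilon_t(x)$ on applying $\varepsilon\otimes\mathrm{id}$, while conversely $\varepsilon_t(x)=x$ turns the $\varepsilon_t$-formula into the stated one. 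Part $(3)(ii)$ is symmetric.

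For Part $(2)(i)$ I would start from $((\mathrm{id}\otimes\varepsilon_t)\circ\Delta)(x)=x_{(1)}\otimes\varepsilon(1_{(1)}x_{(2)})1_{(2)}$ and perform four routine moves. (a) By (WH1) applied to $x=1\cdot x$, replace $\Delta(x)$ by $\Delta(1)\Delta(x)$ with a fresh copy $\Delta(1)=a\otimes b$, so the expression becomes $ax_{(1)}\otimes\varepsilon(1_{(1)}bx_{(2)})1_{(2)}$. (b) The three ``unit factors'' $a$, $1_{(1)}b$ and $1_{(2)}$ reassemble, again by (WH2) in the form $(1\otimes\Delta(1))(\Delta(1)\otimes 1)$, into $\Delta^{(2)}(1)$, so the expression is $1_{(1)}x_{(1)}\otimes\varepsilon(1_{(2)}x_{(2)})1_{(3)}$ with $\Delta^{(2)}(1)=1_{(1)}\otimes 1_{(2)}\otimes 1_{(3)}$. (c) Writing $\Delta^{(2)}(1)=(\Delta\otimes\mathrm{id})(\Delta(1))$, its first two legs are $\Delta(c)$ for $\Delta(1)=c\otimes d$, so by (WH1) once more $1_{(1)}x_{(1)}\otimes 1_{(2)}x_{(2)}=\Delta(c)\Delta(x)=\Delta(cx)$, turning the expression into $(cx)_{(1)}\otimes\varepsilon((cx)_{(2)})d$. (d) The counit axiom collapses $(cx)_{(1)}\varepsilon((cx)_{(2)})$ to $cx$, leaving $cx\otimes d=1_{(1)}x\otimes 1_{(2)}$. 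Part $(2)(ii)$ is the left--right mirror.

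Finally, putting $x=1$ in $(2)(i)$ and $(2)(ii)$ gives \eqref{eq2-3} at once, and the two displays labelled ``Especially'' come from applying $(2)(i)$, resp.\ $(2)(ii)$, to the interior factor of $\Delta^{(2)}(1)$ after writing $\Delta^{(2)}(1)=(\Delta\otimes\mathrm{id})(\Delta(1))$ in the first case (so the middle leg of $\Delta^{(2)}(1)$ is the second leg of $\Delta(1_{[1]})$, matching $(\mathrm{id}\otimes\varepsilon_t)\circ\Delta$) and $\Delta^{(2)}(1)=(\mathrm{id}\otimes\Delta)(\Delta(1))$ in the second (so the middle leg is the first leg of $\Delta(1_{[2]})$, matching $(\varepsilon_s\otimes\mathrm{id})\circ\Delta$). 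None of this is conceptually deep; the real hazard is bookkeeping, since the manipulations juggle several copies of $\Delta(1)$ alongside $\Delta(x)$, and each substitution must be paired with the appropriate instance of (WH1) or factorisation in (WH2). I expect step (b) of Part $(2)(i)$ — recognising $a\otimes 1_{(1)}b\otimes 1_{(2)}$ as $\Delta^{(2)}(1)$ through the correct factorisation in (WH2) — to be the most error-prone point. It is perhaps worth noting that conditions (WH3)(i),(ii) are not needed anywhere in the proof.
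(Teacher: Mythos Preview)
The paper does not actually prove Lemma~\ref{2-1}: Section~2 recalls standard weak-bialgebra identities from \cite{BNS} and \cite{NTV}, and this lemma is simply stated without argument. So there is no ``paper's own proof'' to compare against.

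That said, your plan is sound and the computations go through as you describe. The derivation of the auxiliary formula $\Delta(\varepsilon_t(x))=1_{(1)}\varepsilon_t(x)\otimes 1_{(2)}$ via the factorisation $(1\otimes\Delta(1))(\Delta(1)\otimes 1)$ in (WH2) is correct, and Parts~(1) and~(3) do follow from it by applying $\varepsilon\otimes\mathrm{id}$ as you say. Your four-step argument for Part~(2)(i) is also correct; step~(b) is indeed the delicate point, but the identification $a\otimes 1_{(1)}b\otimes 1_{(2)}=(1\otimes\Delta(1))(\Delta(1)\otimes 1)=\Delta^{(2)}(1)$ is exactly right. Your reading of the ``Especially'' displays is accurate once one observes, as you do, that the paper overloads the subscripts: on the left-hand side $1_{(1)}\otimes 1_{(2)}$ denotes a fresh copy of $\Delta(1)$, while on the right-hand side $1_{(1)}\otimes 1_{(2)}\otimes 1_{(3)}$ denotes $\Delta^{(2)}(1)$. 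Your closing remark that (WH3) plays no role is also correct.

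One minor comment: since you have already proved $\Delta(\varepsilon_t(x))=1_{(1)}\varepsilon_t(x)\otimes 1_{(2)}$ for all $x$, Part~(2)(i) is essentially a corollary. Indeed, $x_{(1)}\otimes\varepsilon_t(x_{(2)})$ equals, by your coproduct formula applied in the second leg, $x_{(1)}\otimes 1_{(1)}\varepsilon_t(x_{(2)})\otimes 1_{(2)}$ with the middle leg collapsed via $\mathrm{id}\otimes\varepsilon\otimes\mathrm{id}$ of $\Delta$; but this is circular, so your separate four-step proof is the cleaner route. The approach you outline is the standard one found in the cited references.
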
 

\newpage 
\par \medskip 
\begin{lem}\label{2-2}
Let $H$ be a weak bialgebra over $\boldsymbol{k}$. For all $x,y\in H$
\begin{enumerate}
\item[$(1)$] $\varepsilon_t(x\varepsilon_t(y))  =\varepsilon_t(xy) $, 
$\varepsilon_s(\varepsilon_s(x)y)  =\varepsilon_s(xy)$.
\item[$(2)$] 
$\varepsilon (x\varepsilon_t(y)) =\varepsilon (xy)$,
$\varepsilon (\varepsilon_s(x)y) =\varepsilon (xy)$.
\item[$(3)$] $\varepsilon \circ \varepsilon _t=\varepsilon =\varepsilon \circ \varepsilon _s$.
\item[$(4)$] $x=\varepsilon_t(x_{(1)})x_{(2)} =x_{(1)}\varepsilon_s(x_{(2)})$. 
\item[$(5)$] 
$x\varepsilon_t(y)  =\varepsilon_t(x_{(1)}y)x_{(2)}$,
$\varepsilon_s(x)y  =y_{(1)}\varepsilon_s(xy_{(2)})$.
\end{enumerate}
\end{lem}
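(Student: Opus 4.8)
The plan is to reduce the whole lemma to one elementary property of the counit. First I would record, by putting $y=1$ in (WH3)(i) and (WH3)(ii) and using $a1b=ab$, the factorization
\[
\varepsilon(ab)=\varepsilon(a1_{(1)})\varepsilon(1_{(2)}b)=\varepsilon(a1_{(2)})\varepsilon(1_{(1)}b)\qquad(a,b\in H);
\]
together with the coalgebra counit axiom and the multiplicativity of $\Delta$ this will do almost everything.

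For part (2), expanding $\varepsilon_t$ in $\varepsilon(x\varepsilon_t(y))$ by \eqref{eq_target_counital_map} turns it into $\varepsilon(1_{(1)}y)\varepsilon(x1_{(2)})=\varepsilon(xy)$ by the factorization identity, and the $\varepsilon_s$-statement is the mirror computation; then part (3) is just the case $x=1$ (resp. $y=1$) of part (2), since $\varepsilon(\varepsilon_t(y))=\varepsilon(1\cdot\varepsilon_t(y))=\varepsilon(1\cdot y)=\varepsilon(y)$. For part (1), expand $\varepsilon_t$ twice in $\varepsilon_t(x\varepsilon_t(y))$: one obtains an expression of the form $\varepsilon(1_{[1]}y)\,\varepsilon(1_{(1)}x1_{[2]})\,1_{(2)}$, where $1_{[1]}\otimes1_{[2]}$ is the inner copy of $\Delta(1)$, and the factorization identity collapses $\varepsilon(1_{(1)}x1_{[2]})\varepsilon(1_{[1]}y)$ to $\varepsilon(1_{(1)}xy)$, yielding $\varepsilon_t(xy)$; again $\varepsilon_s$ is symmetric. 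For part (4), $\varepsilon_t(x_{(1)})x_{(2)}=\varepsilon(1_{(1)}x_{(1)})1_{(2)}x_{(2)}=(\varepsilon\otimes\mathrm{id})\bigl(\Delta(1)\Delta(x)\bigr)=(\varepsilon\otimes\mathrm{id})\Delta(x)=x$ by (WH1) and the counit axiom, and $x_{(1)}\varepsilon_s(x_{(2)})=x$ is the same computation with $\Delta(x)\Delta(1)$.

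Part (5) is the one that needs a real idea. A direct expansion of $\varepsilon_t(x_{(1)}y)x_{(2)}$ does not simplify, because the surviving $\varepsilon_t$ lies in a tensor slot on which the factorization identity cannot act. The trick is twofold: on one side, exactly as in part (4), $\varepsilon_t(x_{(1)}y)x_{(2)}=\varepsilon(1_{(1)}x_{(1)}y)1_{(2)}x_{(2)}=(\varepsilon\otimes\mathrm{id})\bigl(\Delta(x)(y\otimes1)\bigr)=\varepsilon(x_{(1)}y)x_{(2)}$, so the inner $\varepsilon_t$ may be replaced by $\varepsilon$; on the other side, $x\varepsilon_t(y)=\varepsilon(1_{(1)}y)\,x1_{(2)}$, and Lemma~\ref{2-1}(2)(ii), in the form $1_{(1)}\otimes x1_{(2)}=\varepsilon_s(x_{(1)})\otimes x_{(2)}$, rewrites this as $\varepsilon(\varepsilon_s(x_{(1)})y)x_{(2)}=\varepsilon(x_{(1)}y)x_{(2)}$ by part (2); comparing the two proves the first identity, and the second, $\varepsilon_s(x)y=y_{(1)}\varepsilon_s(xy_{(2)})$, is obtained symmetrically from Lemma~\ref{2-1}(2)(i) together with the identity $\varepsilon(x\varepsilon_t(b))=\varepsilon(xb)$ of part (2). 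Thus the sole obstacle is realizing, in part (5), that an $\varepsilon_t$ adjacent to a coproduct leg can be traded for $\varepsilon$ via (WH1) and the counit, and then invoking exactly the half of Lemma~\ref{2-1}(2) that places an $\varepsilon_s$ (resp. $\varepsilon_t$) into the slot where part (2) applies; everything else is routine bookkeeping with the factorization identity from the first step.
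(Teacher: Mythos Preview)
The paper states Lemma~\ref{2-2} without proof (it is quoted as background from the standard literature on weak bialgebras), so there is no argument in the paper to compare yours against. Your proof is correct: the factorization identity $\varepsilon(ab)=\varepsilon(a1_{(1)})\varepsilon(1_{(2)}b)=\varepsilon(a1_{(2)})\varepsilon(1_{(1)}b)$ obtained from (WH3) with $y=1$ does all the work for parts~(1)--(4), and your two-step treatment of part~(5)---first using $\Delta(1)\Delta(x)=\Delta(x)$ to replace $\varepsilon_t(x_{(1)}y)x_{(2)}$ by $\varepsilon(x_{(1)}y)x_{(2)}$, then invoking Lemma~\ref{2-1}(2)(ii) to rewrite $x\varepsilon_t(y)=\varepsilon(1_{(1)}y)x1_{(2)}$ as $\varepsilon(\varepsilon_s(x_{(1)})y)x_{(2)}$ and applying part~(2)---is clean and complete, with the symmetric computation for $\varepsilon_s(x)y=y_{(1)}\varepsilon_s(xy_{(2)})$ going through via Lemma~\ref{2-1}(2)(i) exactly as you indicate.
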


\par \medskip 
\begin{lem}\label{2-3}
Let $H$ be a weak bialgebra over $\boldsymbol{k}$, and set 
$H_t:=\varepsilon _t(H),\ H_s:=\varepsilon _s(H)$. Then
\begin{enumerate}
\item[$(1)$] $x\varepsilon _t(y)=\varepsilon _t(xy)$ for all $x\in H_t$ and $y\in H$. 
\item[$(2)$] $\varepsilon _s(x)y=\varepsilon _s(xy)$ for all $x\in H$ and $y\in H_s$. 
\item[$(3)$] 
\begin{enumerate}
\item[$(i)$] An element of $H_t$ and an element of $H_s$ commute, and 
\item[$(ii)$] $H_t$ and $H_s$ are a left coideal and a right coideal subalgebras of $H$, respectively. 
\end{enumerate}
\end{enumerate}
The subalgebras $H_t$ and $H_s$ are called the target and source subalgebras of $H$, respectively. 
By \eqref{eq2-3} we have 
\begin{equation}\label{eq2-4}
\Delta (1)\in H_s\otimes H_t.
\end{equation}
\begin{enumerate}
\item[$(4)$] For all $x\in H$, $z\in H_t$ and $y\in H_s$
\begin{align*}
\Delta (xz)=x_{(1)}z\otimes x_{(2)},\quad & 
\Delta (zx)=zx_{(1)}\otimes x_{(2)},\\ 
\Delta (xy)=x_{(1)}\otimes x_{(2)}y,\quad & 
\Delta (yx)=x_{(1)}\otimes yx_{(2)}.
\end{align*}
In particular 
\begin{align*}
xz=\varepsilon (x_{(1)}z)x_{(2)},\quad & 
zx=\varepsilon (zx_{(1)})x_{(2)},\\ 
xy=x_{(1)}\varepsilon (x_{(2)}y),\quad & 
yx=x_{(1)}\varepsilon (yx_{(2)}). 
\end{align*}
\end{enumerate}
\end{lem}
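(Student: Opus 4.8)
The plan is to deduce the whole statement from the characterizations of $H_t$ and $H_s$ in Lemma~\ref{2-1}(3), the identities of Lemma~\ref{2-2}, and the two factorizations of $\Delta^{(2)}(1)$ given by (WH2). I start with parts (1) and (2), which are mirror images of each other. For (1), fix $x\in H_t$ and $y\in H$; since $\varepsilon_t^2=\varepsilon_t$ we have $\varepsilon_t(x)=x$, hence $\Delta(x)=1_{(1)}x\otimes 1_{(2)}$ by Lemma~\ref{2-1}(3)(i). Beginning from the first identity of Lemma~\ref{2-2}(5), $x\varepsilon_t(y)=\varepsilon_t(x_{(1)}y)x_{(2)}$, I substitute this form of $\Delta(x)$ to get $x\varepsilon_t(y)=\varepsilon_t(1_{(1)}xy)1_{(2)}$, expand the remaining $\varepsilon_t$ by \eqref{eq_target_counital_map}, and merge the two resulting independent copies of $\Delta(1)$ using $\Delta(1)\Delta(1)=\Delta(1\cdot 1)=\Delta(1)$ (which is (WH1) for $x=y=1$); this leaves exactly $\varepsilon(1_{(1)}xy)1_{(2)}=\varepsilon_t(xy)$. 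Part (2) follows in the same way from the second identity of Lemma~\ref{2-2}(5), from Lemma~\ref{2-1}(3)(ii), and from \eqref{eq_source_counital_map}.

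The heart of the lemma, and the step I expect to require the most care, is the commutativity statement (3)(i). Take $z\in H_t$ and $w\in H_s$ and write $z=\varepsilon_t(a)$, $w=\varepsilon_s(b)$. Expanding $zw$ and $wz$ via \eqref{eq_target_counital_map} and \eqref{eq_source_counital_map}, each is expressed through two independent copies of $\Delta(1)$: in $zw$ the relevant product of legs is the middle leg of $(\Delta(1)\otimes 1)(1\otimes\Delta(1))$, whereas in $wz$ it is the middle leg of $(1\otimes\Delta(1))(\Delta(1)\otimes 1)$. By (WH2) both of these equal the middle leg of $\Delta^{(2)}(1)$, so reinstating the two counital maps shows that both $zw$ and $wz$ collapse to $\bigl(\varepsilon(\,\cdot\,a)\otimes\mathrm{id}\otimes\varepsilon(b\,\cdot\,)\bigr)\bigl(\Delta^{(2)}(1)\bigr)=\varepsilon(1_{(1)}a)\,1_{(2)}\,\varepsilon(b1_{(3)})$, and hence $zw=wz$.

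For (3)(ii): that $H_t$ is a subalgebra is now immediate, since $1=\varepsilon_t(1)\in H_t$ and, for $z,z'\in H_t$, part (1) gives $zz'=z\varepsilon_t(z')=\varepsilon_t(zz')\in H_t$; dually $H_s$ is a subalgebra by (2). That $H_t$ is a left coideal and $H_s$ a right coideal is read off from Lemma~\ref{2-1}(3): for $z\in H_t$ one has $\Delta(z)=1_{(1)}z\otimes 1_{(2)}$, and \eqref{eq2-3} shows its second leg lies in $\varepsilon_t(H)=H_t$, so $\Delta(H_t)\subseteq H\otimes H_t$; symmetrically $\Delta(H_s)\subseteq H_s\otimes H$. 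Applying $\varepsilon_s\otimes\varepsilon_t$ to $\Delta(1)$ and using \eqref{eq2-3} twice gives $\Delta(1)=\varepsilon_s(1_{(1)})\otimes\varepsilon_t(1_{(2)})\in H_s\otimes H_t$, which is \eqref{eq2-4}.

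Finally, the formulas in (4) follow from the multiplicativity of $\Delta$ together with $\Delta(1)\Delta(x)=\Delta(x)=\Delta(x)\Delta(1)$. For example, for $z\in H_t$, $\Delta(xz)=\Delta(x)\Delta(z)=\Delta(x)\bigl(1_{(1)}z\otimes 1_{(2)}\bigr)=x_{(1)}1_{(1)}z\otimes x_{(2)}1_{(2)}$, and reading this as $z$ carried along in the first tensor leg of $\Delta(x)\Delta(1)=\Delta(x)$ gives $x_{(1)}z\otimes x_{(2)}$; the formula $\Delta(zx)=zx_{(1)}\otimes x_{(2)}$ is obtained the same way after first rewriting $\Delta(z)=1_{(1)}z\otimes 1_{(2)}=z1_{(1)}\otimes 1_{(2)}$ by the commutativity just proved, and the two $H_s$-formulas follow dually (one of them again using (3)(i), to move $y$ past $1_{(2)}$). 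The ``in particular'' relations then come by applying $\varepsilon\otimes\mathrm{id}$ or $\mathrm{id}\otimes\varepsilon$ and the counit axiom. Thus the only delicate point is the commutativity (3)(i), where both factorizations in (WH2) are genuinely needed; the remaining parts are routine Sweedler-notation bookkeeping once Lemmas~\ref{2-1} and \ref{2-2} are in hand.
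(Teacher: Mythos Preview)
Your proof is correct. The paper does not supply its own proof of Lemma~\ref{2-3}; it is stated there as one of several standard identities for weak bialgebras recalled without argument from the foundational literature (B\"{o}hm--Nill--Szlach\'{a}nyi and Nill), so there is no ``paper's approach'' to compare against. Your derivations are the expected ones: parts (1)--(2) via Lemma~\ref{2-2}(5) and $\Delta(1)^2=\Delta(1)$, the commutativity (3)(i) from the two (WH2) factorizations of $\Delta^{(2)}(1)$, and (4) from $\Delta(x)\Delta(1)=\Delta(x)=\Delta(1)\Delta(x)$ together with (3)(i) and \eqref{eq2-4}.
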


\par \medskip 
For a weak bialgebra $H$, one can also consider two 
$\boldsymbol{k}$-linear maps $\varepsilon _t^{\prime}, \varepsilon _s^{\prime}: H\longrightarrow H$ defined by 
\begin{align}
\varepsilon _t^{\prime}(x)&=\varepsilon (x1_{(1)})1_{(2)}, \label{eq2-5} \\ 
\varepsilon _s^{\prime}(x)&=1_{(1)}\varepsilon (1_{(2)}x) \label{eq2-6} 
\end{align}
\noindent 
for all $x\in H$. 
Then, $H^{\mathrm{op}}=(H, \mu^{\mathrm{op}}, \eta , \Delta , \varepsilon )$, $H^{\mathrm{cop}}=(H, \mu , \eta , \Delta ^{\mathrm{cop}}, \varepsilon )$, $H^{\mathrm{op cop}}=(H, \mu^{\mathrm{op}}, \eta , \Delta ^{\mathrm{cop}}$, $\varepsilon )$ are weak bialgebras over $\boldsymbol{k}$, 
where $\mu^{\mathrm{op}}$ and $\Delta ^{\mathrm{cop}}$ are the opposite multiplication and comultiplication, respectively. 
The target and the source subalgebras of them are given by
$(H^{\mathrm{op}})_t=H_t,\ (H^{\mathrm{op}})_s=H_s$,\ 
$(H^{\mathrm{cop}})_t=H_s,\ (H^{\mathrm{cop}})_s=H_t$,\ 
$(H^{\mathrm{op cop}})_t=H_s,\ 
(H^{\mathrm{op cop}})_s=H_t$. 
The target and the source counital maps of them are given by 
$(\varepsilon _{H^{\mathrm{op}}})_t=\varepsilon _t^{\prime},\ (\varepsilon _{H^{\mathrm{op}}})_s=\varepsilon _s^{\prime}$, 
$(\varepsilon _{H^{\mathrm{cop}}})_t=\varepsilon _s^{\prime},\ (\varepsilon _{H^{\mathrm{cop}}})_s=\varepsilon _t^{\prime}$,\ 
$(\varepsilon _{H^{\mathrm{op cop}}})_t=\varepsilon _s,\ (\varepsilon _{H^{\mathrm{op cop}}})_s=\varepsilon _t$. 
If $S$ is the antipode of $H$, then it is also of $H^{\mathrm{op cop}}$. 

\par 
For a weak bialgebra $H$ over $\boldsymbol{k}$, we denote by $\Mod^H$ the $\boldsymbol{k}$-linear category whose objects are right $H$-comodules and morphisms are $H$-comodule maps between them. 
The comodule category $\Mod^H$ has a monoidal structure~\cite[Section 4]{NTV} as the following lemma. 

\par \medskip 
\begin{lem}[{\cite[Lemma 4.2]{Nill}}]\label{2-4}
Let $H$ be a weak bialgebra over $\boldsymbol{k}$. 
\begin{enumerate}
\item[$(1)$] For two right $H$-comodules $(M,\rho _M),\ (N, \rho _N)$, the pair 
$(M,\rho _M)\circledast (N, \rho _N):=(M\otimes _{H_s}N, \rho )$ is also a right $H$-comodule, 
where $\rho : M\otimes _{H_s}N\longrightarrow (M\otimes _{H_s}N)\otimes H$ is a $\boldsymbol{k}$-linear map defined by 
\begin{equation}
\rho (m\otimes _{H_s}n)=(m_{(0)}\otimes _{H_s}n_{(0)})\otimes m_{(1)}n_{(1)}\qquad (m\in M,\ n\in N). 
\end{equation}

The source algebra $H_s$ can be regarded as a right $H$-comodule with the coaction $\Delta _s:=\Delta |_{H_s}: H_s: \longrightarrow H_s\otimes H$, and for all right $H$-comodules $L, M, N$ there are natural isomorphisms 
\begin{enumerate}
\item[$(i)$] $H_s\circledast M\cong M\cong M\circledast H_s$ as right $H$-comodules,
\item[$(ii)$] $(L\circledast M)\circledast N\cong L\circledast (M\circledast N)$ as right $H$-comodules. 
\end{enumerate}
Here the natural isomorphisms in $(i)$ are given as follows: 
For a right $H$-comodule $M$
\begin{align*}
l_M : &\ H_s\otimes _{H_s}M\longrightarrow M,\  \quad l_M(y\otimes _{H_s}m)=y\cdot m\qquad (y\in H_s,\ m\in M),\\ 
l_M^{-1}: &\ M\longrightarrow H_s\otimes _{H_s}M ,\  \quad l_M^{-1}(m)=1\otimes _{H_s}m\qquad \quad (m\in M),\\ 
r_M : &\ M\otimes _{H_s}H_s\longrightarrow M,\ \quad r_M(m\otimes _{H_s}y)=m\cdot y\qquad (m\in M,\ y\in H_s),\\ 
r_M^{-1}: &\  M\longrightarrow M\otimes _{H_s}H_s,\  \quad r_M^{-1}(m)=m\otimes _{H_s}1\qquad \quad (m\in M). 
\end{align*}

The isomorphism in $(ii)$ is induced from a usual isomorphism between vector spaces. 
\item[$(2)$] Let $f: (M,\rho _M)\longrightarrow (N, \rho _N)$, $g: (M^{\prime},\rho _M^{\prime})\longrightarrow (N^{\prime}, \rho _N^{\prime})$ be right $H$-comodule maps. Then
$f\otimes _{H_s}g: M\otimes _{H_s}M^{\prime} \longrightarrow N\otimes _{H_s}N^{\prime}$ is also a right $H$-comodule map with respect to the comodule structure given in $(1)$. 
We denote the map $f\otimes _{H_s}g$ by $f\circledast g$. 
\end{enumerate}
By Parts $(1)$, $(2)$ the abelian category $\Mod ^H$ becomes a $\boldsymbol{k}$-linear monoidal category whose unit object is $(H_s, \Delta _s)$. 
\end{lem}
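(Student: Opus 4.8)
My plan is to deduce the whole statement from one structural observation: every right $H$-comodule carries a canonical $H_s$-bimodule structure built from its coaction, and once this is in place the tensor product $\circledast=\otimes_{H_s}$, the unit constraints and the associativity constraint are all essentially forced, and every axiom reduces — using comodule coassociativity to make iterated Sweedler legs $m_{(0)}\otimes m_{(1)}\otimes\cdots$ unambiguous — to an identity taking place inside $H$, where Lemmas \ref{2-1}--\ref{2-3} and (WH1)--(WH3) are available. Concretely, for a right $H$-comodule $(M,\rho)$, $m\in M$ and $y\in H_s$ I set $m\triangleleft y:=m_{(0)}\varepsilon(m_{(1)}y)$ and $y\triangleright m:=m_{(0)}\varepsilon(ym_{(1)})$; using $xy=x_{(1)}\varepsilon(x_{(2)}y)$ and $yx=x_{(1)}\varepsilon(yx_{(2)})$ for $y\in H_s$ (Lemma \ref{2-3}(4)) together with coassociativity of $\rho$, one obtains $\rho(m\triangleleft y)=m_{(0)}\otimes m_{(1)}y$ and $\rho(y\triangleright m)=m_{(0)}\otimes ym_{(1)}$, and from these two identities the associativity, unitality and mutual commutativity of $\triangleleft$ and $\triangleright$ follow at once, so $M$ is an $H_s$-bimodule. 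For $M=(H,\Delta)$ and for $M=(H_s,\Delta_s)$ both actions reduce to the (restricted) product; in particular $(H_s,\Delta_s)$ is a right $H$-comodule — its coaction lands in $H_s\otimes H$ because $H_s$ is a right coideal subalgebra (Lemma \ref{2-3}(3)) — and it is the $H_s$-bimodule $H_s$.

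With $M\otimes_{H_s}N$ now defined, the formula in the statement gives a well-defined map $\rho$ because it is unchanged when $(m\triangleleft y)\otimes n$ is replaced by $m\otimes(y\triangleright n)$, both yielding $(m_{(0)}\otimes_{H_s}n_{(0)})\otimes m_{(1)}yn_{(1)}$ by the two identities above. Coassociativity of $\rho$ follows from coassociativity of $\rho_M,\rho_N$ and (WH1) $\Delta(xy)=\Delta(x)\Delta(y)$. For the counit axiom,
\[ (\mathrm{id}\otimes\varepsilon)\rho(m\otimes_{H_s}n)=(m_{(0)}\otimes_{H_s}n_{(0)})\,\varepsilon(m_{(1)}n_{(1)})=(m_{(0)}\otimes_{H_s}n_{(0)})\,\varepsilon\big(\varepsilon_s(m_{(1)})n_{(1)}\big) \]
by Lemma \ref{2-2}(2); the factor $n_{(0)}\varepsilon(\varepsilon_s(m_{(1)})n_{(1)})$ is $\varepsilon_s(m_{(1)})\triangleright n$, and pushing $\varepsilon_s(m_{(1)})\in H_s$ across $\otimes_{H_s}$ rewrites the expression as $\big(m_{(0)}\triangleleft\varepsilon_s(m_{(1)})\big)\otimes_{H_s}n$; finally $m_{(0)}\triangleleft\varepsilon_s(m_{(1)})=m$ follows from (WH1), the definition \eqref{eq_source_counital_map} of $\varepsilon_s$, and the counit law. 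Hence $M\circledast N$ is a right $H$-comodule.

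The maps $l_M,l_M^{-1},r_M,r_M^{-1}$ are well defined over $H_s$ and mutually inverse by the unitality computations above (for instance $l_M(1\otimes_{H_s}m)=1\triangleright m=m$, and $y\otimes_{H_s}m=(1\triangleleft y)\otimes_{H_s}m=1\otimes_{H_s}(y\triangleright m)$), and they are $H$-comodule isomorphisms by a direct check using (WH1), the counit law, $\Delta(y)=1_{(1)}\otimes y1_{(2)}$ for $y\in H_s$ (Lemma \ref{2-1}(3)), and — for $r_M$ — the fact that elements of $H_s$ and $H_t$ commute (Lemma \ref{2-3}(3)). The associativity constraint is the canonical $\boldsymbol{k}$-linear isomorphism $(L\otimes_{H_s}M)\otimes_{H_s}N\cong L\otimes_{H_s}(M\otimes_{H_s}N)$, which is a comodule map because associativity in $H$ gives $(l_{(1)}m_{(1)})n_{(1)}=l_{(1)}(m_{(1)}n_{(1)})$. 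For part (2), an $H$-comodule map commutes with $\triangleleft$ and $\triangleright$ (these being built from the coaction), hence is an $H_s$-bimodule map, so $f\circledast g:=f\otimes_{H_s}g$ is a well-defined $\boldsymbol{k}$-linear map intertwining the coactions, and functoriality of $\circledast$ is then clear. Naturality of $l$, $r$ and of the associativity constraint follows from part (2); the pentagon and triangle identities are inherited from those for iterated $\otimes_{H_s}$ together with the $H_s$-module description of $l$ and $r$; and we conclude that $\Mod^H$ is a $\boldsymbol{k}$-linear monoidal category with unit object $(H_s,\Delta_s)$.

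No single step is deep; the difficulty is organizational, and it is concentrated in the counit axiom for $M\otimes_{H_s}N$ (and, relatedly, in checking that $l_M,r_M$ are honest comodule isomorphisms rather than identities). This is exactly where the weak structure departs from the classical bialgebra case: there the base ring is $\boldsymbol{k}$, $\otimes_{H_s}=\otimes_{\boldsymbol{k}}$, and the counit axiom is trivial, whereas here one must first make comodules into $H_s$-modules compatibly with the coaction, then pass to the quotient over $H_s$, and invoke (WH1), (WH3) (through Lemmas \ref{2-2}--\ref{2-3}) and the idempotents $\varepsilon_s,\varepsilon_t$. Carrying these module- and comodule-compatibilities through every verification is the only real work.
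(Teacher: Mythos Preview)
Your argument is correct and well organized. Note, however, that the paper does not supply its own proof of this lemma: it is quoted verbatim from \cite[Lemma~4.2]{Nill} and stated without proof, so there is nothing in the paper to compare your argument against line by line.

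That said, your proof is consistent with the surrounding material. The $H_s$-bimodule structure you introduce via $m\triangleleft y=m_{(0)}\varepsilon(m_{(1)}y)$ and $y\triangleright m=m_{(0)}\varepsilon(ym_{(1)})$, together with the compatibilities $\rho(m\triangleleft y)=m_{(0)}\otimes m_{(1)}y$ and $\rho(y\triangleright m)=m_{(0)}\otimes ym_{(1)}$, is exactly the content the paper records separately as Lemma~\ref{2-5} (also quoted from Nill). In effect you have folded the proof of Lemma~\ref{2-5} into your derivation of Lemma~\ref{2-4}, which is natural since the former is logically prior. Your handling of the counit axiom for $M\otimes_{H_s}N$ and of the unit constraints $l_M,r_M$ is the substantive part, and the identities you invoke from Lemmas~\ref{2-1}--\ref{2-3} are the right ones.
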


\par \medskip 
\begin{lem}[{\cite[Proposition 4.1]{Nill}}]\label{2-5}
Let $H$ be a weak bialgebra over $\boldsymbol{k}$, and $(M, \rho _M)$ be a right $H$-comodule. 
For $y\in H_s$ and $m\in M$, the elements $y\cdot m$ and $m\cdot y$ in $M$ are defined as follows: 
\begin{align}
y\cdot m &:=m_{(0)}\varepsilon (ym_{(1)}), \\ 
m\cdot y &:=m_{(0)}\varepsilon (m_{(1)}y). 
\end{align}
\begin{enumerate}
\item[$(1)$] $M$ becomes an $(H_s, H_s)$-bimodule equipped with the above actions. 
\item[$(2)$] $M\otimes H$ becomes an $(H_s, H_s)$-bimodule equipped with the following actions: 
For $y\in H_s,\ m\in M,\ x\in H$, 
\begin{align}
y\cdot (m\otimes x)&:=(1_{(1)}\cdot m)\otimes (y1_{(2)}x)=m_{(0)}\otimes \varepsilon_t(m_{(1)})yx, \\ 
(m\otimes x)\cdot y &:=(m\cdot 1_{(1)})\otimes (xy1_{(2)})=m_{(0)}\otimes xy\varepsilon_t^{\prime}(m_{(1)}). 
\end{align}
\item[$(3)$] The following equations hold for all $y\in H_s$ and $m\in M$: 
\begin{enumerate}
\item[$(i)$] $\rho _M(y\cdot m)=m_{(0)}\otimes ym_{(1)}=y\cdot \rho _M(m)$. 
\item[$(ii)$] $\rho _M(m\cdot y)=m_{(0)}\otimes m_{(1)}y=\rho _M(m)\cdot y$. 
\item[$(iii)$] $\varepsilon _s^{\prime}(m_{(1)})\cdot m_{(0)}=m=m_{(0)}\cdot \varepsilon _s(m_{(1)})$. 
\end{enumerate}
In particular, $\rho_M: M \longrightarrow M\otimes H$ is an $(H_s, H_s)$-bimodule map by $(i)$ and $(ii)$.  
\item[$(4)$] Let $(N, \rho _N)$ be a right $H$-comodule and $f: (M, \rho _M)\longrightarrow (N, \rho _N)$ be an $H$-comodule map. 
Then $f$ becomes an $(H_s, H_s)$-bimodule map with respect to the bimodule structures given by $(1)$. 
\end{enumerate}
\end{lem}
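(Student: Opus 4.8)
All four parts reduce to computations in Sweedler notation, using the defining formulas \eqref{eq_target_counital_map}, \eqref{eq_source_counital_map}, \eqref{eq2-5}, \eqref{eq2-6} for the counital maps, the counit and coassociativity axioms of the coaction $\rho_M$, and Lemmas~\ref{2-1}--\ref{2-3}. I would establish Part~$(3)$ first, since Parts~$(1)$, $(2)$ and $(4)$ then follow with little extra work.

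For Part~$(3)(i)$ the plan is as follows. By $\boldsymbol{k}$-linearity of $\rho_M$ one has $\rho_M(y\cdot m)=\rho_M(m_{(0)})\,\varepsilon(ym_{(1)})$, and after applying coassociativity and writing $(\rho_M\otimes\mathrm{id})\rho_M(m)=(\mathrm{id}\otimes\Delta)\rho_M(m)=m_{(0)}\otimes m_{(1)}\otimes m_{(2)}$ this becomes $m_{(0)}\otimes m_{(1)}\varepsilon(ym_{(2)})$; since $m_{(1)}\otimes m_{(2)}=\Delta(m_{(1)})$, the identity $x_{(1)}\varepsilon(yx_{(2)})=yx$ valid for $y\in H_s$ — the ``in particular'' statement of Lemma~\ref{2-3}$(4)$ — collapses this to $m_{(0)}\otimes ym_{(1)}$. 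On the other side, unwinding the left $H_s$-action on $M\otimes H$ and using $\varepsilon_t(x_{(1)})\,y\,x_{(2)}=y\,\varepsilon_t(x_{(1)})\,x_{(2)}=yx$, where the first equality is the commutativity of $H_t$ and $H_s$ (Lemma~\ref{2-3}$(3)(i)$) and the second is Lemma~\ref{2-2}$(4)$, gives the same element for $y\cdot\rho_M(m)$. Part~$(3)(ii)$ is the mirror computation, now using $x_{(1)}\varepsilon(x_{(2)}y)=xy$ for $y\in H_s$ and the $H^{\mathrm{op}}$-form $x_{(2)}\,\varepsilon_t^{\prime}(x_{(1)})=x$ of Lemma~\ref{2-2}$(4)$. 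For Part~$(3)(iii)$ I would expand the two $H_s$-actions on $M$, apply coassociativity, and collapse the inner factor to $m_{(1)}$ via $x_{(1)}\varepsilon_s(x_{(2)})=x$ (Lemma~\ref{2-2}$(4)$) and its $H^{\mathrm{op}}$-counterpart $\varepsilon_s^{\prime}(x_{(2)})x_{(1)}=x$; the counit axiom then yields $m$.

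Granting Part~$(3)$, Part~$(1)$ is quick: $1\cdot m=m_{(0)}\varepsilon(m_{(1)})=m$ and $m\cdot1=m_{(0)}\varepsilon(m_{(1)})=m$ by the counit axiom, while the associativity of each one-sided action and the fact that the left and right actions commute follow at once by inserting the formulas $\rho_M(y\cdot m)=m_{(0)}\otimes ym_{(1)}$ and $\rho_M(m\cdot y)=m_{(0)}\otimes m_{(1)}y$ from Part~$(3)$ into the definitions. For Part~$(2)$ one first checks that the two displayed expressions for each action coincide, using $\varepsilon(1_{(1)}x)1_{(2)}=\varepsilon_t(x)$, $\varepsilon(x1_{(1)})1_{(2)}=\varepsilon_t^{\prime}(x)$ and the commutativity of $H_t$ and $H_s$, and then verifies the bimodule axioms for $M\otimes H$ by similar manipulations, each one reducing to an identity for $\varepsilon_t$ or $\varepsilon_t^{\prime}$ already contained in Lemmas~\ref{2-1}--\ref{2-3}. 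Finally, for Part~$(4)$: if $f:(M,\rho_M)\longrightarrow(N,\rho_N)$ is an $H$-comodule map then $\rho_N(f(m))=f(m_{(0)})\otimes m_{(1)}$, so $f(y\cdot m)=f(m_{(0)})\varepsilon(ym_{(1)})=f(m)_{(0)}\varepsilon(y\,f(m)_{(1)})=y\cdot f(m)$, and similarly $f(m\cdot y)=f(m)\cdot y$; hence $f$ is $(H_s,H_s)$-bilinear.

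The only real obstacle is bookkeeping rather than any idea. In Part~$(2)$ one sums simultaneously over the coaction and over $\Delta(1)=1_{(1)}\otimes 1_{(2)}$, so the two families of Sweedler indices must be kept strictly apart; and in Parts~$(3)(ii)$ and $(3)(iii)$ one must transport Lemma~\ref{2-2}$(4)$ correctly through the passage to $H^{\mathrm{op}}$ in order to treat $\varepsilon_t^{\prime}$ and $\varepsilon_s^{\prime}$. With that care, the statement is a routine, if lengthy, consequence of the axioms (WH1)--(WH3).
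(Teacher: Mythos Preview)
The paper does not supply its own proof of this lemma; it is stated with a citation to \cite[Proposition~4.1]{Nill} and used without further justification. So there is nothing to compare against.

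Your sketch is correct and is essentially the standard verification. The strategy of establishing Part~$(3)$ first and then reading off Parts~$(1)$, $(2)$, $(4)$ is efficient: once $\rho_M(y\cdot m)=m_{(0)}\otimes ym_{(1)}$ and its right-hand analogue are in hand, associativity and bilinearity of the actions reduce to one-line computations, exactly as you indicate. The key algebraic inputs you name --- Lemma~\ref{2-3}$(4)$ for $yx=x_{(1)}\varepsilon(yx_{(2)})$ with $y\in H_s$, Lemma~\ref{2-2}$(4)$ for $\varepsilon_t(x_{(1)})x_{(2)}=x$, and the commutation of $H_t$ with $H_s$ --- are the right ones, and your handling of the primed maps via passage to $H^{\mathrm{op}}$ is the cleanest way to avoid rederiving parallel identities. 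The only cosmetic point is that in Part~$(2)$ you might also note explicitly that $\varepsilon_s'(H)\subset H_s$ (immediate from \eqref{eq2-3}), so that the expression in $(3)(iii)$ is well-defined; otherwise nothing is missing.
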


\par \medskip 
Let $H$ be a weak bialgebra over $\boldsymbol{k}$, and denote by ${}_{H_s}\Mod_{H_s}$ the $\boldsymbol{k}$-linear category consisting of whose objects are $(H_s, H_s)$-bimodules and morphisms are $(H_s, H_s)$-bimodule maps between them. 
By Lemma~\ref{2-5} for a right $H$-comodule $(M, \rho_M)$, 
 the underlying vector space $M$ has an $(H_s, H_s)$-bimodule structure, and $\rho_M: M \longrightarrow M\otimes H$ becomes an $(H_s, H_s)$-bimodule map. 
Thus $(M, \rho_M)$ can be regarded as a right $H$-comodule in ${}_{H_s}\Mod_{H_s}$. 
Then any $H$-comodule map  $f: M\longrightarrow N$ always an $(H_s, H_s)$-bimodule map. 
\par 
We denote by ${}_{H_s}\Mod_{H_s}^H$ the $\boldsymbol{k}$-linear category whose objects are right $H$-comodules in ${}_{H_s}\Mod_{H_s}$ and morphisms are right $H$-comodule and $(H_s, H_s)$-bimodule maps between them. 
The category ${}_{H_s}\Mod_{H_s}^H$ has a $\boldsymbol{k}$-linear monoidal structure whose tensor product is given by 
$\otimes_{H_s}$. 
\par 
As a special case of \cite[Theorem 2.2]{Szlach05} we have: 

\par \medskip 
\begin{lem}\label{2-6}
Let $H$ be a weak bialgebra over $\boldsymbol{k}$. 
By Lemma~\ref{2-5} for a right $H$-comodule $(M, \rho_M)$, 
there is an $(H_s, H_s)$-bimodule structure on the underlying vector space $M$, 
and $\rho_M: M \longrightarrow M\otimes H$ becomes an $(H_s, H_s)$-bimodule map. 
Thus $(M, \rho_M)$ can be regarded as a right $H$-comodule in ${}_{H_s}\Mod_{H_s}$. 
Then any $H$-comodule map  $f: M\longrightarrow N$ is always an $(H_s, H_s)$-bimodule map. 
This correspondence gives rise to a $\boldsymbol{k}$-linear monoidal equivalence $\Xi ^H: \Mod^H \longrightarrow {}_{H_s}\Mod_{H_s}^H$ between $\boldsymbol{k}$-linear monoidal categories. 
\end{lem}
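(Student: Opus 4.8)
The plan is to prove the slightly stronger statement that $\Xi^H$ is an isomorphism of $\boldsymbol{k}$-linear monoidal categories; this is the concrete incarnation of \cite[Theorem~2.2]{Szlach05} (with the $\boldsymbol{k}$-bialgebroid being $H$ over the base algebra $H_s$), but it can be checked by hand from the material already assembled. First, $\Xi^H$ is a well-defined $\boldsymbol{k}$-linear functor: on objects it attaches to a right $H$-comodule $(M,\rho_M)$ the $(H_s,H_s)$-bimodule structure of Lemma~\ref{2-5}(1), and by Lemma~\ref{2-5}(3) the coaction $\rho_M$ is then an $(H_s,H_s)$-bimodule map $M\to M\otimes H$ for the bimodule structure of Lemma~\ref{2-5}(2), while the comodule axioms are untouched, so $\Xi^H(M,\rho_M)\in{}_{H_s}\Mod_{H_s}^H$; on morphisms $\Xi^H$ leaves the underlying linear map unchanged, which is legitimate because every $H$-comodule map is automatically $(H_s,H_s)$-bilinear by Lemma~\ref{2-5}(4). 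Functoriality and $\boldsymbol{k}$-linearity are then clear, $\Xi^H$ is faithful for the same reason, and it is full: a morphism $\Xi^H(M)\to\Xi^H(N)$ in ${}_{H_s}\Mod_{H_s}^H$ is in particular an $H$-comodule map $M\to N$, hence a morphism of $\Mod^H$ with itself as its $\Xi^H$-image.

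Next, I would record that $\Xi^H$ is (strictly) monoidal. On both categories the tensor product of two objects has underlying space the balanced tensor product over $H_s$, and the right $H$-coaction $m\otimes_{H_s}n\mapsto m_{(0)}\otimes_{H_s}n_{(0)}\otimes m_{(1)}n_{(1)}$ used in Lemma~\ref{2-4} is precisely the coaction carried by $\Xi^H(M)\otimes_{H_s}\Xi^H(N)$ in ${}_{H_s}\Mod_{H_s}^H$; both unit objects are $(H_s,\Delta_s)$, and a short check shows that the induced bimodule structure of Lemma~\ref{2-5}(1) on $H_s$ with coaction $\Delta_s$ is the regular one. The one identity that needs verification is that the bimodule structure which Lemma~\ref{2-5}(1) attaches to the comodule $M\circledast N$ coincides with the canonical bimodule structure on the balanced tensor product of the bimodules $\Xi^H(M)$ and $\Xi^H(N)$; this follows from the formulas of Lemmas~\ref{2-4} and~\ref{2-5} together with the absorption identities $\varepsilon(x\varepsilon_t(y))=\varepsilon(xy)$ and $\varepsilon(\varepsilon_s(x)y)=\varepsilon(xy)$ of Lemma~\ref{2-2}(2). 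Granting this, the structure constraints on the two sides agree and $\Xi^H$ transports the associativity and unit constraints of Lemma~\ref{2-4} to those of ${}_{H_s}\Mod_{H_s}^H$, so $\Xi^H$ is a strict (in particular strong) monoidal functor.

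Finally, and this is the crux, I would prove that $\Xi^H$ is bijective on objects, which together with full faithfulness gives the monoidal equivalence (indeed isomorphism). Let $(M,\rho_M)$ be an object of ${}_{H_s}\Mod_{H_s}^H$ with $H_s$-actions $\triangleright$ and $\triangleleft$; forgetting these makes $(M,\rho_M)$ a right $H$-comodule, hence an object $\widetilde M$ of $\Mod^H$, and it suffices to see that $\Xi^H(\widetilde M)=(M,\rho_M)$ with the same actions, i.e.\ that the induced actions $y\cdot m=m_{(0)}\varepsilon(ym_{(1)})$ and $m\cdot y=m_{(0)}\varepsilon(m_{(1)}y)$ coincide with $\triangleright$ and $\triangleleft$. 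The key observation is that $\rho_M$ is injective: the $\boldsymbol{k}$-linear map $M\otimes H\to M$, $m\otimes h\mapsto \varepsilon_s'(h)\cdot m$, is a left inverse of $\rho_M$ by Lemma~\ref{2-5}(3)(iii). Since $\rho_M$ is an $(H_s,H_s)$-bimodule map and the left action on $M\otimes H$ satisfies $y\cdot\rho_M(m)=m_{(0)}\otimes ym_{(1)}$ by Lemma~\ref{2-5}(3)(i), we get $\rho_M(y\triangleright m)=m_{(0)}\otimes ym_{(1)}=\rho_M(y\cdot m)$, whence $y\triangleright m=y\cdot m$ by injectivity; the equality $m\triangleleft y=m\cdot y$ follows symmetrically from Lemma~\ref{2-5}(3)(ii). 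I expect the main obstacle to be not this argument itself but the bookkeeping around it: one has to be careful that the definition of ${}_{H_s}\Mod_{H_s}^H$ (in particular the bimodule structure of Lemma~\ref{2-5}(2) on $M\otimes H$) is arranged so that ``$\rho_M$ is a bimodule map'' genuinely forces the bimodule structure to be recovered from the comodule structure, which amounts to re-reading the identities of Lemma~\ref{2-5}(3) as uniqueness statements — precisely the kind of re-examination of the coalgebra/comodule computations that the introduction anticipates.
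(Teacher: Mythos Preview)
The paper does not give a proof of this lemma at all; it merely records it as a special case of \cite[Theorem~2.2]{Szlach05} and moves on. So your proposal is not competing against a proof in the paper but rather supplying one where the paper defers to the literature. That said, your direct argument is sound and in the spirit of the paper's hands-on treatment of the surrounding lemmas: the functor is well-defined and fully faithful for the reasons you give, the strict monoidality reduces to the bimodule computations you indicate (the check that the induced $(H_s,H_s)$-bimodule structure on $M\circledast N$ is the balanced one and that on $H_s$ it is the regular one are exactly the right points), and the bijectivity on objects via injectivity of $\rho_M$ is the clean way to do it. One small simplification: you do not need Lemma~\ref{2-5}(3)(iii) to see that $\rho_M$ is injective, since $(\mathrm{id}\otimes\varepsilon)\circ\rho_M=\mathrm{id}_M$ already gives a left inverse.

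The caveat you flag at the end is real and worth making explicit rather than leaving as an ``expected obstacle'': your recovery argument for the actions works precisely because the $(H_s,H_s)$-bimodule structure on $M\otimes H$ in Lemma~\ref{2-5}(2) is expressed by the \emph{second} formula $y\cdot(m\otimes x)=m_{(0)}\otimes\varepsilon_t(m_{(1)})yx$, which depends only on the coaction and not on any pre-existing bimodule structure on $M$. If one instead took the first formula $(1_{(1)}\triangleright m)\otimes(y1_{(2)}x)$ with an \emph{a priori} action $\triangleright$, the argument would be circular. Since the paper's definition of ${}_{H_s}\Mod_{H_s}^H$ is terse (``right $H$-comodules in ${}_{H_s}\Mod_{H_s}$''), a reader-friendly write-up should state at the outset that the bimodule structure on $M\otimes H$ is the comodule-determined one of Lemma~\ref{2-5}(2), so that the uniqueness step is non-circular. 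With that clarification, your proof is complete and more informative than the paper's bare citation.
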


\par \medskip 
We set 
$$\hat{U}^H:=U^H\circ \Xi ^H: \Mod^H \longrightarrow {}_{H_s}\Mod_{H_s},$$
where $U^H: {}_{H_s}\Mod_{H_s}^H \longrightarrow {}_{H_s}\Mod_{H_s}$ is the forgetful monoidal functor. 
The composition $\hat{U}^H$ of monoidal functors becomes a monoidal functor, whose structure is given as follows: 
\begin{enumerate}
\item[$\bullet$] $\phi _{M, N}^{\hat{U}^H}=\text{id}_{M\otimes _{H_s}N}$ for each $M, N\in  \Mod^H$, 
\item[$\bullet$] $\omega ^{\hat{U}^H}: H_s \longrightarrow \hat{U}^H(H_s)=H_s$ is the identity map. 
\end{enumerate}

\par \medskip 
\begin{lem}\label{2-7}
Let $H, K$ be weak bialgebras over $\boldsymbol{k}$, and $\varphi : H\longrightarrow K$ be a weak bialgebra map, namely, 
an algebra map and coalgebra map. 
Then $\varphi (H_s)\subset K_s$ and $\varphi (H_t)\subset K_t$. 
The former inclusion induces an algebra map 
$\varphi _s:=\varphi |_{H_s}: H_s \longrightarrow K_s$. 
\begin{enumerate}
\item[$(1)$] For a right $H$-comodule $(M, \rho_M)$
$$\Mod^{\varphi }(M, \rho_M):=(M, (\text{id}_M\otimes \varphi )\circ \rho_M)$$ 
is a right $K$-comodule, and for a right $H$-comodule map 
$f : (M, \rho_M) \longrightarrow (N, \rho_N)$ 
$$\Mod^{\varphi }(f):=f: \Mod^{\varphi }(M, \rho_M)\longrightarrow \Mod^{\varphi }(N, \rho_N)$$
is a right $K$-comodule map. 
These correspondences define a covariant functor 
$\Mod^{\varphi}: \Mod^H \longrightarrow \Mod^K$. 
\item[$(2)$] The functor $\Mod^{\varphi}$ is a $\boldsymbol{k}$-linear comonoidal functor. 
If $\varphi _s$ is bijective, then the $\boldsymbol{k}$-linear comonoidal functor $\Mod^{\varphi}$ is strong. 
In this case it can be regarded as a $\boldsymbol{k}$-linear monoidal functor. 
\item[$(3)$] The algebra map $\varphi _s$ induces a $\boldsymbol{k}$-linear monoidal functor ${}_{\varphi _s}\Mod_{\varphi _s}: {}_{K_s}\Mod_{K_s}\longrightarrow {}_{H_s}\Mod_{H_s}$, and if $\varphi _s$ is bijective, then $\hat{U}^K\circ \Mod^{\varphi}={}_{\varphi _s^{-1}}\Mod_{\varphi _s^{-1}}\circ \hat{U}^H : \Mod^H \longrightarrow {}_{K_s}\Mod_{K_s}$ as monoidal functors. 
\end{enumerate}
\end{lem}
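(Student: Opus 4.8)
The plan is to verify each clause by direct computation from the hypothesis that $\varphi$ is simultaneously an algebra map and a coalgebra map, so that $\varphi(1_H)=1_K$, $\Delta_K\circ\varphi=(\varphi\otimes\varphi)\circ\Delta_H$ and $\varepsilon_K\circ\varphi=\varepsilon_H$. First I would establish the intertwining relations $\varphi\circ\varepsilon^H_t=\varepsilon^K_t\circ\varphi$ and $\varphi\circ\varepsilon^H_s=\varepsilon^K_s\circ\varphi$: writing $\Delta_H(1_H)=1_{[1]}\otimes 1_{[2]}$ one computes $\varphi(\varepsilon^H_t(x))=\varepsilon_H(1_{[1]}x)\varphi(1_{[2]})=\varepsilon_K\bigl(\varphi(1_{[1]})\varphi(x)\bigr)\varphi(1_{[2]})=\varepsilon^K_t(\varphi(x))$, because $\varphi(1_{[1]})\otimes\varphi(1_{[2]})=\Delta_K(1_K)$; the source case is symmetric. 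These yield $\varphi(H_t)\subset K_t$ and $\varphi(H_s)\subset K_s$, and since $1_H\in H_s$, $1_K\in K_s$ and $H_s$ is a subalgebra, the restriction $\varphi_s=\varphi|_{H_s}\colon H_s\to K_s$ is a unital algebra map. For Part~$(1)$ I would set $\rho^\varphi_M:=(\mathrm{id}_M\otimes\varphi)\circ\rho_M$ and verify the comodule axioms: coassociativity of $\rho^\varphi_M$ reduces to coassociativity of $\rho_M$ together with $\Delta_K\circ\varphi=(\varphi\otimes\varphi)\circ\Delta_H$, and the counit law to $\varepsilon_K\circ\varphi=\varepsilon_H$; moreover an $H$-comodule map $f$ satisfies $\rho^\varphi_N\circ f=(f\otimes\varphi)\circ\rho_M=(f\otimes\mathrm{id}_K)\circ\rho^\varphi_M$, hence is a $K$-comodule map. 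Functoriality and $\boldsymbol{k}$-linearity of $\Mod^{\varphi}$ are then immediate.

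For Part~$(2)$ the crucial observation is an identification of module structures: using Lemma~\ref{2-5} one checks that the $(H_s,H_s)$-bimodule structure attached to the right $H$-comodule $M$ is precisely the restriction along $\varphi_s$ of the $(K_s,K_s)$-bimodule structure attached to the right $K$-comodule $\Mod^{\varphi}(M)$; indeed $m\cdot\varphi_s(z)=m_{(0)}\varepsilon_K\bigl(\varphi(m_{(1)}z)\bigr)=m_{(0)}\varepsilon_H(m_{(1)}z)=m\cdot z$ for $z\in H_s$, and the left action is symmetric. Consequently the canonical surjection $q_{M,N}\colon M\otimes_{H_s}N\to M\otimes_{K_s}N$, $m\otimes_{H_s}n\mapsto m\otimes_{K_s}n$, is well defined, and comparing the two induced $K$-coactions shows it is a right $K$-comodule map. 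I would then take $\bar\phi^{\Mod^{\varphi}}_{M,N}:=q_{M,N}$, whose naturality in $M,N$ is clear, and let the unit comparison morphism be $\varphi_s$, regarded as a morphism between the unit objects $\Mod^{\varphi}(H_s)$ and $K_s$; that this is a $K$-comodule map holds because $\varphi$ is a coalgebra map and, for $z\in H_s$, $z_{(1)}=1_{[1]}\in H_s$ by Lemma~\ref{2-1} and~\eqref{eq2-4}.

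It then remains to check the associativity and unit coherence axioms of a comonoidal functor. Here the associators of $\Mod^H$ and $\Mod^K$ are the evident associativity isomorphisms of iterated relative tensor products, the unit constraints are the explicit maps $l^H_M,r^H_M,l^K_M,r^K_M$ of Lemma~\ref{2-4}, and the structure morphisms just constructed are the canonical projections $q_{M,N}$ and the map $\varphi_s$; the coherence diagrams then commute by direct element chases on $\ell\otimes m\otimes n$, on $y\otimes m$ and on $m\otimes y$. Finally, if $\varphi_s$ is bijective the $H_s$- and $K_s$-relative tensor products are identified, so every $q_{M,N}$ is an isomorphism and so is $\varphi_s$; hence the comonoidal functor $\Mod^{\varphi}$ is strong, and inverting its structure morphisms realizes it as a $\boldsymbol{k}$-linear monoidal functor. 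This coherence verification --- with the bookkeeping forced by the two relative tensor products and the change of unit object $H_s\rightsquigarrow K_s$ --- is the step I expect to be the main obstacle, and it is precisely the complication stressed in the introduction.

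For Part~$(3)$, ${}_{\varphi_s}\Mod_{\varphi_s}$ is restriction of scalars along the unital algebra map $\varphi_s\colon H_s\to K_s$, carrying its usual monoidal structure: the canonical morphisms ${}_{\varphi_s}\Mod_{\varphi_s}(M)\otimes_{H_s}{}_{\varphi_s}\Mod_{\varphi_s}(N)\to{}_{\varphi_s}\Mod_{\varphi_s}(M\otimes_{K_s}N)$ together with $\varphi_s$ on the unit objects, a structure that is strong when $\varphi_s$ is bijective. Assuming $\varphi_s$ bijective, I would first check that $\hat{U}^K\circ\Mod^{\varphi}$ and ${}_{\varphi_s^{-1}}\Mod_{\varphi_s^{-1}}\circ\hat{U}^H$ agree as ordinary functors: both send a right $H$-comodule $M$ to the vector space $M$ with the $(K_s,K_s)$-bimodule structure whose left action is $y\cdot m=m_{(0)}\varepsilon_K(y\varphi(m_{(1)}))$ --- for the second functor this uses $\varphi(\varphi_s^{-1}(y))=y$ for $y\in K_s$ --- and symmetrically on the right, and both act as the identity on morphisms. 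It then remains to check that their monoidal structure morphisms coincide under this identification, which is a short diagram chase using that the comparison morphisms $\phi^{\hat{U}^H},\phi^{\hat{U}^K},\omega^{\hat{U}^H},\omega^{\hat{U}^K}$ are identities and that everything left on either side is the canonical identification determined by $\varphi_s$.
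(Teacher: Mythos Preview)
Your proposal is correct and follows essentially the same approach as the paper: the comonoidal structure on $\Mod^{\varphi}$ is given by the canonical surjections $M\otimes_{H_s}N\to M\otimes_{K_s}N$ (your $q_{M,N}$, the paper's $\iota_{M,N}$) together with $\varphi_s$ on the unit objects, and Part~(3) is restriction of scalars along $\varphi_s$. You supply more detail than the paper does---the intertwining relations $\varphi\circ\varepsilon^H_s=\varepsilon^K_s\circ\varphi$, the explicit identification of the $(H_s,H_s)$-bimodule structure on $M$ with the $\varphi_s$-restriction of the $(K_s,K_s)$-structure on $\Mod^{\varphi}(M)$ via Lemma~\ref{2-5}, and the coherence verification---but the strategy is identical.
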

\begin{proof} 
(2) For right $H$-comodules $(M, \rho_M), (N, \rho_N)$ we set 
\begin{align*}
\Mod^{\varphi }(M, \rho_M)\circledast \Mod^{\varphi }(N, \rho_N)&=(M\otimes _{K_s} N, \rho ),\\ 
\Mod^{\varphi }\bigl( (M, \rho_M)\circledast (N, \rho_N)\bigr) &=(M\otimes _{H_s} N, \rho ^{\prime}), 
\end{align*}
where $\rho$ and $\rho^{\prime}$ are given as follows: 
\begin{align*}
\rho : M\otimes _{K_s} N\longrightarrow (M\otimes _{K_s} N)\otimes K,\quad & 
\rho (m\otimes _{K_s} n)=m_{(0)}\otimes _{K_s} n_{(0)} \otimes \varphi (m_{(1)})\varphi (n_{(1)}),\\ 
\rho ^{\prime}: M\otimes _{H_s} N\longrightarrow (M\otimes _{H_s} N)\otimes K,\quad &
\rho ^{\prime}(m\otimes _{H_s} n)=m_{(0)}\otimes _{H_s} n_{(0)} \otimes \varphi (m_{(1)}n_{(1)}). 
\end{align*}

Since $\varphi$ is an algebra map, the identity map $\text{id}_{M\otimes N}$ induces a surjection $\iota _{M, N}: M\otimes _{H_s} N \longrightarrow M\otimes_{K_s}N$ which is a right $K$-comodule map. 
\par 
The restriction $\varphi _s: H_s \longrightarrow K_s$ can be regarded as a right $K$-comodule map from \newline $\Mod^{\varphi }(H_s, (\Delta _H)_s)=(H_s,\ (\text{id}_{H_s}\otimes \varphi )\circ (\Delta _H)_s)$ to $(K_s, (\Delta _K)_s)$. 
Thus, the triplet $(\Mod^{\varphi}, \iota , \varphi_s)$ is a comonoidal functor from $\Mod^H$ to $\Mod^K$, where  $\iota :=\{ \iota_{M, N}\}_{M,N \in \Mod^H}$. 
\par 
If $\varphi_s$ is bijective, then $\iota _{M, N}$ for all $M, N\in \Mod^H$ is an isomorphism. 
Thus, $(\Mod^{\varphi}, \iota^{-1} , \varphi_s^{-1}): \Mod^H \longrightarrow \Mod^K$ is a strong monoidal functor. 
\par 
(3) For a $(K_s, K_s)$-bimodule $(M, \alpha _l, \alpha _r)$ we set 
$${}_{\varphi_s}\Mod_{\varphi_s}(M, \alpha _l, \alpha _r)=\bigl( M, \alpha _l\circ (\varphi_s\otimes \text{id}_M), \alpha_r\circ (\text{id}_M\otimes \varphi_s)\bigr) ,$$
and a $(K_s, K_s)$-bimodule map $f: M \longrightarrow N$ we set 
$${}_{\varphi_s}\Mod_{\varphi_s}(f)=f.$$
Then, ${}_{\varphi_s}\Mod_{\varphi_s}$ is a $\boldsymbol{k}$-linear covariant functor. 
\par 
For $(K_s, K_s)$-bimodules $M=(M, \alpha _l^M, \alpha _r^M)$ and $N=(N, \alpha _l^N, \alpha _r^N)$, let 
$$\jmath _{M, N}: {}_{\varphi_s}\Mod_{\varphi_s}(M)\otimes _{H_s}{}_{\varphi_s}\Mod_{\varphi_s}(N) \longrightarrow {}_{\varphi_s}\Mod_{\varphi_s}(M\otimes _{K_s} N)$$
be the induced $\boldsymbol{k}$-linear map from the identity map $\text{id}_{M_\otimes N}$. 
This is an $(H_s, H_s)$-module map which is natural with respect to $M ,N$. 
So, 
we have a monoidal functor $({}_{\varphi_s}\Mod_{\varphi_s}, \jmath , \varphi_s)$ by setting $\jmath=\{ \jmath_{M, N}\}_{M, N\in {}_{K_s}\Mod_{K_s}}$. 
If $\varphi_s$ is bijective, then 
one can show that $\hat{U}^K\circ \Mod^{\varphi}={}_{\varphi _s^{-1}}\Mod_{\varphi _s^{-1}}\circ \hat{U}^H$ as $\boldsymbol{k}$-linear monoidal functors. 
\end{proof} 

\par \bigskip 
\section{Reconstruction of a weak bialgebra map}
\par 
First of all, we will recall the proof of the reconstruction theorem of coalgebra maps, that is a classical and fundamental theorem for Tannakian reconstruction theorem. 

\par 
For a coalgebra $C$ over $\boldsymbol{k}$ we denote by $\mathbb{M}^C$ 
the $\boldsymbol{k}$-linear category consisting of whose objects are right $C$-comodules of finite dimension and morphisms are $C$-comodule maps between them, and denote by $U^C$ 
the forgetful functor from $\mathbb{M}^C$ to $\mathrm{Vect}_{\boldsymbol{k}}^{\mathrm{f.d.}}$. 

\par \medskip 
\begin{thm}\label{3-1}
Let $C, D$ be coalgebras over $\boldsymbol{k}$, and $F: \mathbb{M}^C \longrightarrow \mathbb{M}^D$ be a $\boldsymbol{k}$-linear covariant functor. 
If $U^D\circ F=U^C$, then there is a unique coalgebra map $\varphi : C\longrightarrow D$ such that $F=\mathbb{M}^{\varphi }: \mathbb{M}^C \longrightarrow \mathbb{M}^D$. 
Here, $\mathbb{M}^{\varphi }$ denotes the $\boldsymbol{k}$-linear functor induced from $\varphi $. 
\end{thm}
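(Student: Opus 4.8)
The plan is to reconstruct $\varphi$ from the functor $F$ by exploiting the well-known fact that every coalgebra $C$ is the union of its finite-dimensional subcoalgebras, together with the regular comodule structure. First I would recall that $C$ itself, with comultiplication as coaction, is a right $C$-comodule, but it need not be finite-dimensional; so instead I would work with the family of finite-dimensional subcoalgebras $C' \subseteq C$, each of which lies in $\mathbb{M}^C$ via $\Delta|_{C'} : C' \to C' \otimes C$ (recall $\Delta(C') \subseteq C' \otimes C'$). Applying $F$ and using the hypothesis $U^D \circ F = U^C$, the underlying vector space of $F(C')$ is again $C'$, and its $D$-coaction is some $\boldsymbol{k}$-linear map $\rho_{C'} : C' \to C' \otimes D$. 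I would then define $\varphi$ on $C'$ by $\varphi|_{C'} := (\varepsilon_{C'} \otimes \mathrm{id}_D) \circ \rho_{C'}$, i.e. $\varphi(c) = \varepsilon(c_{(0)}) c_{(1)}$ in Sweedler-type notation for the $D$-coaction.

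The key steps, in order, are: (i) check that this definition is independent of the choice of finite-dimensional subcoalgebra $C' \ni c$, so that a single map $\varphi : C \to D$ is well-defined — this follows because for $C' \subseteq C''$ the inclusion is a morphism in $\mathbb{M}^C$, hence $F$ applied to it is a $D$-comodule map that is the identity on underlying spaces, forcing compatibility of $\rho_{C'}$ and $\rho_{C''}$; (ii) verify that $\varphi$ is a coalgebra map, i.e. $\Delta_D \circ \varphi = (\varphi \otimes \varphi) \circ \Delta_C$ and $\varepsilon_D \circ \varphi = \varepsilon_C$ — the counit identity comes directly from the counit axiom of the $D$-comodule $F(C')$, and the comultiplication identity comes from coassociativity of $\rho_{C'}$ together with the fact that $\rho_{C'}$ is itself a $C$-comodule map from $C'$ (with regular coaction) once one identifies $\rho_{C'} = (\mathrm{id}_{C'} \otimes \varphi) \circ \Delta|_{C'}$, which is really what has to be proved; (iii) show $F = \mathbb{M}^{\varphi}$, i.e. for an arbitrary $(M, \rho_M) \in \mathbb{M}^C$ the $D$-coaction on $F(M)$ equals $(\mathrm{id}_M \otimes \varphi) \circ \rho_M$ — here I would use that $\rho_M : M \to M \otimes C$ factors through $M \otimes C'$ for a suitable finite-dimensional $C'$, that $\rho_M$ is a morphism $M \to M \otimes C'$ in $\mathbb{M}^C$ where the target carries the coaction $\mathrm{id}_M \otimes \Delta|_{C'}$, and then apply $F$ and naturality; (iv) uniqueness, which is immediate since any $\varphi$ with $F = \mathbb{M}^{\varphi}$ must recover the coaction on each $F(C')$ in the stated way.

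The main obstacle I expect is step (ii)/(iii): proving that the auxiliary coaction $\rho_{C'}$ on $F(C')$ is forced to equal $(\mathrm{id}_{C'} \otimes \varphi) \circ \Delta|_{C'}$ rather than merely defining $\varphi$ from $\rho_{C'}$ and hoping things cohere. The trick is to observe that for $c \in C'$ the map $C' \to C'$, $x \mapsto \varepsilon(c_{(1)}) x_{(?)} \cdots$ — more precisely, the "evaluation at $c$ against the counit" — is encoded by a morphism in $\mathbb{M}^C$ (left multiplication by a functional, or the comodule map $C' \to \boldsymbol{k} \otimes C' \cong$ something), so that applying $F$ and using that $F$ is a functor preserving these maps pins down $\rho_{C'}$ completely in terms of $\varphi$. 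Once this coherence between the "one-variable" reconstruction on subcoalgebras and the general coaction is established, the verification that $\varphi$ is a coalgebra homomorphism and that $F = \mathbb{M}^{\varphi}$ is routine diagram-chasing with Sweedler notation.
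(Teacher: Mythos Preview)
Your proposal is correct and follows essentially the same route as the paper: define $\varphi$ on each finite-dimensional subcoalgebra $P$ by $\varphi_P=(\varepsilon_P\otimes\mathrm{id})\circ\rho_P^F$, patch these together via inclusions, and recover the general coaction by factoring $\rho_M$ through $M\otimes P$. The ``trick'' you anticipate for the obstacle is exactly the paper's key step: for every functional $\gamma\in P^{\ast}$ the map $(\gamma\otimes\mathrm{id}_P)\circ\Delta_P$ is a $C$-comodule endomorphism of $P$, and applying $F$ to all such maps forces $(\mathrm{id}_P\otimes\rho_P^F)\circ\Delta_P=(\Delta_P\otimes\mathrm{id}_D)\circ\rho_P^F$, from which both the coalgebra-map property of $\varphi_P$ and the identity $\rho_P^F=(\mathrm{id}\otimes\varphi)\circ\Delta_P$ drop out.
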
 
\begin{proof} 
The proof follows from Franco's lecture note~\cite[p.81--84]{Franco}. 
\par 
Let $(M, \rho_M)$ be a finite-dimensional $C$-comodule. 
By the assumption $U^D\circ F=U^C$, 
one can set $F(M, \rho_M)=(M, \rho_M^F)$, where 
$\rho_M^F: M\longrightarrow M\otimes D$ is a right coaction of $D$. 
\par 
Let $P$ be a finite-dimensional subcoalgebra of $C$. 
It can be regarded as a right $C$-comodule by the coaction 
$$\rho_P: P \xrightarrow{\ \ \Delta _P\ \ } P\otimes P \xrightarrow{\  \ \text{id}\otimes \iota _P\ \ } P\otimes C,$$
where $\iota_P$ stands for the inclusion. 
Since $P$ is finite-dimensional, it gives an object of $\mathbb{M}^C$, and therefore we have 
$F(P, \rho_P)=(P, \rho_P^F)\in \mathbb{M}^D$. 
\par 
Let us consider the composition 
$$\varphi _P : P \xrightarrow{\ \ \rho_P^F\ \ } P\otimes D \xrightarrow{\ \varepsilon _P\otimes\text{id}\ } \boldsymbol{k}\otimes D \cong D.$$
Then 
$\varphi _P: P\longrightarrow D$ is a coalgebra map. 
This fact can be verified as follows. 

\par \noindent 
$\bullet$ The equation $\varepsilon _D\circ \varphi _P=\varepsilon _P$ comes from the following commutative diagram. 

{\setlength{\unitlength}{0.7cm}
\begin{center}
\begin{picture}(17,4.5)
\put(1,3.5){$P$}
\put(5,3.5){$P\otimes D$}
\put(10,3.5){$\boldsymbol{k}\otimes D$}
\put(11.9, 3.5){$\cong$}
\put(13,3.5){$D$}
\put(1,0.5){$P$}
\put(3, 0.5){$\cong$}
\put(5, 0.5){$P\otimes \boldsymbol{k}$}
\put(10, 0.5){$\boldsymbol{k}\otimes \boldsymbol{k}$}
\put(11.9, 0.5){$\cong$}
\put(13, 0.5){$\boldsymbol{k}$}
\put(3.1, 3.9){\small $\rho_P^F$}
\put(7.5, 3.9){\small $\varepsilon_P\otimes \text{id}$}
\put(7.5, 0.9){\small $\varepsilon_P\otimes \text{id}$}
\put(0.7, 2.2){\small $\text{id}$}
\put(6, 2.2){\small $\text{id}\otimes \varepsilon _D$}
\put(10.9, 2.2){\small $\text{id}\otimes \varepsilon _D$}
\put(13.4, 2.2){\small $\varepsilon _D$}
\put(1.7,3.6){\vector(1,0){3}}
\put(6.8,3.6){\vector(1,0){3}}
\put(6.8,0.6){\vector(1,0){3}}
\put(1.2, 3.2){\vector(0,-1){2}}
\put(5.8, 3.2){\vector(0,-1){2}}
\put(10.7, 3.2){\vector(0,-1){2}}
\put(13.2, 3.2){\vector(0,-1){2}}
\end{picture}
\end{center}
}


\noindent 
$\bullet$ To show the equation $\Delta _D\circ \varphi _P=(\varphi _P\otimes \varphi _P)\circ \Delta _P$, 
it is enough to verify the following diagram commutes: 

{\setlength{\unitlength}{0.3mm}
\begin{center}
\begin{picture}(430,225)(60,-100)
\put(50,100){$P$}
\put(150,100){$P\otimes P$}
\put(400,100){$P\otimes D\otimes P\otimes D$}
\put(100, 50){$P\otimes D$}
\put(200, 50){$P\otimes P\otimes D$}
\put(150, 0){$P\otimes D\otimes D$}
\put(225, 0){$\cong$}
\put(250, 0){$P\otimes D\otimes \boldsymbol{k}\otimes D$}
\put(400, 0){$\boldsymbol{k}\otimes D\otimes \boldsymbol{k}\otimes D$}
\put(300, -50){$\boldsymbol{k}\otimes D\otimes D$}
\put(50, -100){$P\otimes D$}
\put(200, -100){$\boldsymbol{k}\otimes D\cong D$}
\put(420, -100){$D\otimes D$}
\put(80,0){(A)}
\put(135,75){(B)}
\put(220,25){(C)}
\put(100,110){\small $\Delta_P$}
\put(280,110){\small $\rho_P^F\otimes \rho_P^F$}
\put(80,83){\small $\text{id}\otimes \rho_P^F$}
\put(210,83){\small $\rho_P^F$}
\put(148, 60){\small $\Delta_P\otimes \text{id}$}
\put(132, 32){\small $\rho_P^F\otimes \text{id}$}
\put(120, -50){\small $\text{id}\otimes \Delta _D$}
\put(40, 2){\small $\rho_P^F$}
\put(122, -93){\small $\varepsilon_P\otimes \text{id}$}
\put(327, -93){\small $\Delta_P$}
\put(315, 83){\small $\rho_P^F\otimes \text{id}$}
\put(340, 43){\small $\text{id}\otimes \varepsilon _P\otimes \text{id}$}
\put(260, -23){\small $\varepsilon _P\otimes \text{id}$}
\put(370, -70){\small $\cong$}
\put(445, 50){\small $\varepsilon _P\otimes \text{id}\otimes \varepsilon _P\otimes \text{id}$}
\put(445, -50){\small $\cong $}
\put(63.5,104){\vector(1,0){80}}
\put(194,104){\vector(1,0){200}}
\put(63.5,94){\vector(1,-1){33}}
\put(194,94){\vector(1,-1){33}}
\put(143.5,54){\vector(1,0){50}}
\put(113.5, 44){\vector(1,-1){33}}
\put(57.5, 94){\vector(0,-1){175}}
\put(73.5, -84){\vector(1,1){75}}
\put(93, -96){\vector(1,0){95}}
\put(267, -96){\vector(1,0){140}}
\put(183.5, -10){\vector(4,-1){110}}
\put(343.5, -58){\vector(2,-1){65}}
\put(440, 93){\vector(0,-1){75}}
\put(440, -11){\vector(0,-1){75}}
\put(270, 61){\vector(4,1){125}}
\put(410, 91){\vector(-1,-1){75}}
\end{picture}
\end{center}
}

Part (A) is commutative since $\rho_P^F$ is a right $D$-coaction, and 
Part (C) is commutative since the following diagram is so. 

{\setlength{\unitlength}{0.3mm}
\begin{center}
\begin{picture}(300, 130)(75,-55)
\put(50,50){$P$}
\put(150,50){$P\otimes P$}
\put(300,50){$P\otimes D\otimes P\otimes D$}
\put(90, 0){$P\otimes D$}
\put(200, 0){$P\otimes D$}
\put(34, -50){$P\otimes D$}
\put(325, -50){$D\otimes D$}
\put(85, 60){\small $\Delta_P\otimes \text{id}$}
\put(220, 60){\small $\rho_P^F\otimes\text{id}$}
\put(68, 25){\small $\cong$}
\put(175, 25){\small $\text{id}\otimes \varepsilon_P\otimes \text{id}$}
\put(150, -5){\small $\cong$}
\put(270, -5){\small $\rho_P^F\otimes \text{id}$}
\put(13, 0){\small $\rho_P^F\otimes \text{id}$}
\put(200, -43){\small $\cong$}
\put(347, 0){\small $\text{id}\otimes \varepsilon _P\otimes \text{id}$}
\put(65, 55){\vector(1,0){80}}
\put(192, 55){\vector(1,0){100}}
\put(62, 47.5){\vector(1,-1){35}}
\put(160, 47.5){\vector(-1,-1){35}}
\put(132, 5){\vector(1,0){60}}
\put(52.5, 45){\vector(0,-1){80}}
\put(342.5, 45){\vector(0,-1){80}}
\put(76, -45){\vector(1,0){245}}
\put(246, 2){\vector(2,-1){75}}
\end{picture}
\end{center}
}


\par 
The proof the commutativity of Part (B) is a little technical. 
For any $\boldsymbol{k}$-linear map $\gamma : P\longrightarrow \boldsymbol{k}$, the composition 
$$P\xrightarrow{\ \ \Delta_P\ \ }P\otimes P \xrightarrow{\ \gamma \otimes \text{id}_P\ } P$$
is a right $P$-comodule map, and hence it is also a right $C$-comodule map. 
Sending it by $F$ we have 
a right $D$-comodule map $(\gamma \otimes \text{id}_P)\circ \Delta_P: P\longrightarrow P$. 
Thus the following diagram commutes: 

{\setlength{\unitlength}{0.7cm}
\begin{center}
\begin{picture}(15,4.5)
\put(1,3.5){$P$}
\put(5,3.5){$P\otimes P$}
\put(11,3.5){$P$}
\put(1,0.5){$P$}
\put(4.2, 0.5){$P\otimes P\otimes D$}
\put(10.5, 0.5){$P\otimes D$}
\put(2.8, 3.8){\small $\Delta_P$}
\put(8.0, 3.8){\small $\gamma\otimes\text{id}$}
\put(1.8, 0.8){\small $\Delta_P\otimes \text{id}$}
\put(7.5, 0.8){\small $\gamma\otimes \text{id}\otimes \text{id}$}
\put(0.4, 2.2){\small $\rho_P^F$}
\put(11.4, 2.2){\small $\rho_P^F$}
\put(1.7, 3.6){\vector(1,0){3}}
\put(6.8, 3.6){\vector(1,0){4}}
\put(1.5, 0.6){\vector(1,0){2.4}}
\put(7,0.6){\vector(1,0){3.3}}
\put(1.2, 3.2){\vector(0,-1){2}}
\put(11.2, 3.2){\vector(0,-1){2}}
\end{picture}
\end{center}
}


Combining the above diagram with $\rho_P^F\circ (\gamma \otimes \text{id}_P)
= (\gamma \otimes \text{id}_{P\otimes D})\circ (\text{id}_P  \otimes \rho_P^F)$, 
we have 
$$
(\gamma \otimes \text{id}_{P\otimes D})\circ (\text{id}_P  \otimes \rho_P^F)\circ \Delta _P
=(\gamma \otimes \text{id}_{P\otimes D})\circ (\Delta _P\otimes \text{id}_P)\circ \rho_P^F.$$
Since this equation holds for all $\boldsymbol{k}$-linear maps $\gamma $, 
we see that $(\text{id}_P  \otimes \rho_P^F)\circ \Delta _P=(\Delta _P\otimes \text{id}_P)\circ \rho_P^F$. 
This implies the commutativity of Part (B). 

\par 
By the fundamental theorem for coalgebras, $C$ is a union of finite-dimensional subcoalgebras. 
Based on the fact, it can be shown that the coalgebra maps $\varphi _P : P \longrightarrow D$ for all finite-dimensional subcoalgebras $P$ induce a well-defined coalgebra map $\varphi :C \longrightarrow D$. 
In fact  it is easily verified that $(\varphi _Q)|_P=\varphi _P$ for two finite-dimensional subcoalgebras $P$ and $Q$ satisfying with $P\subset Q$. 
In this way, it is proved that there is a coalgebra map  $\varphi : C\longrightarrow D$ such that $\varphi |_P=\varphi _P$ for all finite-dimensional subcoalgebra $P$ of $C$. 
\par 
The coalgebra map $\varphi $ satisfies $F=\mathbb{M}^{\varphi }$. 
This fact can be derived as follows.  
\par \noindent 
$\bullet$ Let $(M, \rho_M)$ be a finite-dimensional right $C$-comodule. 
Then there is a finite-dimensional subcoalgebra $P$ of $C$ such that $\rho _M(M)\subset M\otimes P$. 
Since $(M\otimes P, \text{id}_M\otimes \rho_P)$ is a right $C$-comodule, we have a right $D$-comodule $(M\otimes P,\ (\text{id}_M\otimes \rho_P)^F)$. 
Since $M\otimes P$ is decomposed to a direct sum of finite copies of $P$ as a right $C$-comodule and 
$F$ is a $\boldsymbol{k}$-linear functor satisfying $U^C=U^D\circ F$, it follows that $(\text{id}_M\otimes \rho_P)^F=\text{id}_M\otimes \rho_P^F: M\otimes P \longrightarrow M\otimes P\otimes D$. 
\par 
Let $\rho_M^{\prime}: M\longrightarrow M\otimes P$ be the restriction of $\rho_M$. 
Then the map $\rho_M^{\prime}$ is a right $C$-comodule map from 
$(M, \rho_M)$ to $(M\otimes P, \text{id}\otimes \rho_P)$. 
Thus, 
$F(\rho_M^{\prime})=\rho_M^{\prime} : M\longrightarrow M\otimes P$ is a right $D$-comodule map from $(M, \rho_M^F)$ to $(M\otimes P, (\text{id}\otimes \rho_P)^F)=(M\otimes P, \text{id}_M\otimes \rho_P^F)$, and hence 
the following diagram commutes. 

{\setlength{\unitlength}{0.7cm}
\begin{center}
\begin{picture}(11,4.5)
\put(1,3.5){$M$}
\put(7,3.5){$M\otimes P$}
\put(1,0.5){$M\otimes D$}
\put(6.5, 0.5){$M\otimes P\otimes D$}
\put(4, 3.9){\small $\rho_M^{\prime}$}
\put(3.8, 0.9){\small $\rho_M^{\prime}\otimes \text{id}$}
\put(0.4, 2.2){\small $\rho_M^F$}
\put(8.1, 2.2){\small $\text{id}\otimes \rho_P^F$}
\put(1.7, 3.6){\vector(1,0){5}}
\put(2.9, 0.6){\vector(1,0){3.5}}
\put(1.2, 3.2){\vector(0,-1){2}}
\put(7.9, 3.2){\vector(0,-1){2}}
\end{picture}
\end{center}
}

\noindent 
It follows that we have the commutative diagram: 

{\setlength{\unitlength}{0.3mm}
\begin{center}
\begin{picture}(400,225)(50,-105)
\put(50, 100){$M$}
\put(350, 100){$M\otimes C$}
\put(165, 50){$M\otimes P$}
\put(150, 0){$M\otimes P\otimes D$}
\put(150, -50){$M\otimes \boldsymbol{k}\otimes D$}
\put(50, -100){$M\otimes D$}
\put(350, -100){$M\otimes D$}
\put(200, 110){\small $\rho_M$}
\put(120, 85){\small $\rho_M^{\prime}$}
\put(200, -93){\small $\text{id}$}
\put(290, 70){\small $\text{id}\otimes \iota_P$}
\put(290, -20){\small $\text{id}\otimes \varphi _P$}
\put(40, 10){\small $\rho_M^F$}
\put(375, 10){\small $\text{id}\otimes \varphi $}
\put(140, 30){\small $\text{id}\otimes \rho_P^F$}
\put(187, -22){\small $\text{id}\otimes \varepsilon_P\otimes \text{id}$}
\put(85, -35){\small $\rho_M^{\prime}\otimes \text{id}$}
\put(258, -65){\small $\cong $}
\put(66,104){\vector(1,0){280}}
\put(68,97){\vector(3,-1){95}}
\put(183, 42){\vector(0,-1){27}}
\put(183, -5){\vector(0,-1){32}}
\put(57.5, 94){\vector(0,-1){175}}
\put(73.5, -84){\vector(1,1){75}}
\put(96, -96){\vector(1,0){250}}
\put(215, -57){\vector(4,-1){128}}
\put(370, 93){\vector(0,-1){175}}
\put(213, 61){\vector(4,1){130}}
\put(213, 49){\vector(1,-1){135}}
\end{picture}
\end{center}
}

\noindent 
This implies that $\rho_M^F=(\text{id}\otimes \varphi )\circ \rho_M$, 
and hence we see that $F(M, \rho_M)=\mathbb{M}^{\varphi }(M, \rho_M)$. 

\par \noindent 
$\bullet$ Let $f : (M, \rho_M)\longrightarrow (N, \rho_N)$ be a right $C$-comodule map between finite-dimensional right $C$-comodules.  
Then 
\begin{align*}
F(f) =f & : (M, \rho_M^F)\longrightarrow (N, \rho_N^F),\\ 
\mathbb{M}^{\varphi }(f) =f& : (M, (\text{id}\otimes \varphi )\circ \rho_M)\longrightarrow (N, (\text{id}\otimes \varphi )\circ \rho_N). 
\end{align*}
As shown that $\rho_M^F=(\text{id}\otimes \varphi )\circ \rho_M$ and $\rho_N^F=(\text{id}\otimes \varphi )\circ \rho_N$, 
we see that 
$F(f) =\mathbb{M}^{\varphi }(f)$. Thus $F=\mathbb{M}^{\varphi }$ as $\boldsymbol{k}$-linear functors. 
\par 
Finally, we show the uniqueness of $\varphi $. 
Suppose that a coalgebra map $\psi : C\longrightarrow D$ satisfies $F=\mathbb{M}^{\psi}$, too. 
Let $P$ be a finite-dimensional subcoalgebra of $C$, and regard it as the right $C$-comodule $(P, \rho_P)$. 
Since $\mathbb{M}^{\varphi }(P, \rho_P)=\mathbb{M}^{\psi}(P, \rho_P)$, the following diagram commutes. 

{\setlength{\unitlength}{0.7cm}
\begin{center}
\begin{picture}(11,4.5)
\put(1,3.5){$P$}
\put(7,3.5){$P\otimes C$}
\put(1,0.5){$P\otimes C$}
\put(6.5, 0.5){$P\otimes D$}
\put(4.0, 3.8){\small $\rho_P$}
\put(4, 0.8){\small $\text{id}\otimes \varphi $}
\put(0.5, 2.2){\small $\rho_P$}
\put(7.9, 2.2){\small $\text{id}\otimes \psi$}
\put(1.7, 3.6){\vector(1,0){5}}
\put(2.8, 0.6){\vector(1,0){3.5}}
\put(1.2, 3.2){\vector(0,-1){2}}
\put(7.7, 3.2){\vector(0,-1){2}}
\end{picture}
\end{center}
}

\noindent 
Since the diagram 

{\setlength{\unitlength}{0.3mm}
\begin{center}
\begin{picture}(300, 130)(50,-75)
\put(50, 25){$P$}
\put(150, 25){$P\otimes C$}
\put(270, 25){$P\otimes D$}
\put(150, -25){$\boldsymbol{k}\otimes D$}
\put(270, -25){$\boldsymbol{k}\otimes D$}
\put(160, -40){\rotatebox[origin=c]{90}{$\cong$}}
\put(280, -40){\rotatebox[origin=c]{90}{$\cong$}}
\put(160, -56){$C$}
\put(280, -56){$D$}
\put(100, 35){\small $\rho_P$}
\put(210, 35){\small $\text{id}\otimes \varphi $}
\put(90, -20){\small $\iota_P$}
\put(125, 0){\small $\varepsilon_P\otimes \text{id}$}
\put(290, 0){\small $\varepsilon_P\otimes \text{id}$}
\put(210, -17){\small $\text{id}\otimes \varphi $}
\put(225, -47){\small $\varphi $}
\put(65, 30){\vector(1,0){80}}
\put(193, 30){\vector(1,0){70}}
\put(190, -22){\vector(1,0){72}}
\put(180, -52){\vector(1,0){92}}
\put(60, 15){\vector(3,-2){95}}
\put(165, 18){\vector(0,-1){30}}
\put(285, 18){\vector(0,-1){30}}
\end{picture}
\end{center}
}

\noindent 
commutes, and the same commutative diagram holds for $\psi$, we have 
$\varphi \circ \iota_P=\psi\circ \iota_P$. 
This implies that $\varphi =\psi$ since $C$ can be regarded as a union of finite-dimensional subcoalgebras. 
\end{proof} 

\par \medskip 
\begin{rem}\label{3-2}
The coalgebra map $\varphi _P: P\longrightarrow D$ in the above proof is a right $D$-comodule map from $(P, \rho_P^F)$ to $(D, \Delta _D)$. 
It follows from the following commutative diagram. 

{\setlength{\unitlength}{0.3mm}
\begin{center}
\begin{picture}(400, 175)(25,-85)
\put(55, 75){$P$}
\put(370, 75){$D$}
\put(40, -75){$P\otimes D$}
\put(350, -75){$D\otimes D$}
\put(140, 25){$P\otimes D$}
\put(275, 25){$\boldsymbol{k}\otimes D$}
\put(125, -25){$P\otimes D\otimes D$}
\put(260, -25){$\boldsymbol{k}\otimes D\otimes D$}
\put(210, 50){$\mathrm{(1)}$}
\put(210, -45){$\mathrm{(2)}$}
\put(85, 0){$\mathrm{(3)}$}
\put(210, 83){\small $\varphi _P$}
\put(210, 33){\small $\varepsilon_P\otimes \mathrm{id}$}
\put(205, -17){\small $\varepsilon_P\otimes \mathrm{id}$}
\put(200, -67){\small $\varphi _P\otimes \mathrm{id}$}
\put(43, 0){\small $\rho_P^F$}
\put(100, 58){\small $\rho_P^F$}
\put(330, 60){\small $\sim$}
\put(320, -40){\small $\sim$}
\put(105, -55){\small $\rho_P^F\otimes \mathrm{id}$}
\put(380, 0){\small $\Delta_D$}
\put(115, 5){\small $\mathrm{id}\otimes \Delta_D$}
\put(295, 5){\small $\mathrm{id}\otimes \Delta_D$}
\put(71, 78){\vector(1,0){290}}
\put(82, -72){\vector(1,0){260}}
\put(183, 28){\vector(1,0){85}}
\put(195, -22){\vector(1,0){58}}
\put(60, 65){\vector(0,-1){120}}
\put(375, 65){\vector(0,-1){120}}
\put(160, 20){\vector(0,-1){30}}
\put(290, 20){\vector(0,-1){30}}
\put(67, 70){\vector(2,-1){70}}
\put(75, -60){\vector(2,1){55}}
\put(300, 40){\line(2,1){65}}
\put(300, -32){\line(2,-1){58}}
\end{picture}
\end{center}
}

\noindent 
Here, commutativity of Parts $\mathrm{(1)}$ and $\mathrm{(2)}$ come from the definition of $\varphi _P$, and 
Part $\mathrm{(3)}$ comes from what $(P, \rho_P^F)$ is a right $D$-comodule. 
\end{rem}

\par \medskip 
As an application of Theorem~\ref{3-1} we have: 

\par \medskip 
\begin{cor}\label{3-3}
Let $C, D$ be two coalgebras over $\boldsymbol{k}$, and $F: \mathbb{M}^C \longrightarrow \mathbb{M}^D$ be a $\boldsymbol{k}$-linear functor. 
If $F$ is an equivalence of $\boldsymbol{k}$-linear categories and $U^D\circ F=U^C$ is satisfied, then 
the coalgebra map $\varphi : C\longrightarrow D$ determined by $F=\mathbb{M}^{\varphi }: \mathbb{M}^C \longrightarrow \mathbb{M}^D$ in Theorem~\ref{3-1} is an isomorphism. 
\end{cor}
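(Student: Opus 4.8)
The plan is to reduce the statement to the uniqueness clause of Theorem~\ref{3-1} together with a rigidity lemma for the functors $\mathbb{M}^{\theta}$. First I would choose a quasi-inverse $G_{0}:\mathbb{M}^{D}\to\mathbb{M}^{C}$ of the equivalence $F$; since $F$ is $\boldsymbol{k}$-linear and fully faithful, $G_{0}$ is $\boldsymbol{k}$-linear. The natural isomorphism $F G_{0}\cong\mathrm{id}_{\mathbb{M}^{D}}$ then lets one replace $G_{0}$ by an isomorphic functor $G$ with $U^{C}\circ G=U^{D}$ \emph{on the nose}: for a finite-dimensional $D$-comodule $(N,\sigma)$ transport the $C$-comodule structure of $G_{0}(N,\sigma)$ along the linear isomorphism underlying the component $F G_{0}(N,\sigma)\xrightarrow{\ \sim\ }(N,\sigma)$ — the two underlying spaces do coincide, because $U^{D}\circ F=U^{C}$ forces $U^{D}(F G_{0}(N,\sigma))=U^{C}(G_{0}(N,\sigma))$ — and check, using naturality of $F G_{0}\cong\mathrm{id}$, that on morphisms $G$ again acts as the identity on underlying linear maps, so that $U^{C}\circ G=U^{D}$. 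Then $G\cong G_{0}$, hence $G$ is still a quasi-inverse of $F$ and is $\boldsymbol{k}$-linear.

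Because $U^{C}\circ G=U^{D}$, Theorem~\ref{3-1} (applied with the roles of $C$ and $D$ interchanged) yields a coalgebra map $\psi:D\to C$ with $G=\mathbb{M}^{\psi}$. Functoriality of $\mathbb{M}^{(-)}$ gives $G\circ F=\mathbb{M}^{\psi}\circ\mathbb{M}^{\varphi}=\mathbb{M}^{\psi\circ\varphi}$ and likewise $F\circ G=\mathbb{M}^{\varphi\circ\psi}$, while $G\circ F\cong\mathrm{id}_{\mathbb{M}^{C}}$ and $F\circ G\cong\mathrm{id}_{\mathbb{M}^{D}}$ since $G$ is a quasi-inverse. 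Note that one cannot conclude $\psi\circ\varphi=\mathrm{id}_{C}$ directly from the uniqueness in Theorem~\ref{3-1}: a coalgebra automorphism $\theta:C\to C$ can satisfy $\mathbb{M}^{\theta}\cong\mathrm{id}$ without being the identity (for instance when $C$ is finite-dimensional and $\theta^{*}$ is a nontrivial inner automorphism of the dual algebra $C^{*}$). What does hold, and what I would isolate as a lemma, is that $\mathbb{M}^{\theta}\cong\mathrm{id}_{\mathbb{M}^{C}}$ forces $\theta$ to be \emph{bijective}.

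For that lemma I would argue with coefficient coalgebras. For every finite-dimensional subcoalgebra $P\subseteq C$, regarded as the $C$-comodule $(P,\rho_{P})$ with $\rho_{P}=(\mathrm{id}\otimes\iota_{P})\circ\Delta_{P}$, the smallest subcoalgebra $E$ with $\rho_{P}(P)\subseteq P\otimes E$ is $P$ itself, whereas for $\mathbb{M}^{\theta}(P,\rho_{P})=(P,(\mathrm{id}\otimes\theta)\circ\rho_{P})$ it is $\theta(P)$, which is a subcoalgebra because $\theta$ is a coalgebra map. The coefficient coalgebra is invariant under $C$-comodule isomorphism, and $\mathbb{M}^{\theta}(P,\rho_{P})\cong(P,\rho_{P})$, so $\theta(P)=P$; as $P$ is finite-dimensional, $\theta|_{P}$ is bijective. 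Ranging over all such $P$ and using that $C$ is the union of its finite-dimensional subcoalgebras, $\theta$ is injective and surjective. Applying this to $\theta=\psi\circ\varphi$ and to $\theta=\varphi\circ\psi$ shows both composites are bijective, so $\varphi$ admits a left inverse $(\psi\circ\varphi)^{-1}\circ\psi$ and a right inverse $\psi\circ(\varphi\circ\psi)^{-1}$, hence is bijective; a bijective coalgebra map is a coalgebra isomorphism, which finishes the proof.

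The step I expect to cost the most care is the first one: Theorem~\ref{3-1} is stated only for functors that are \emph{strictly} over the forgetful functors, so the abstract quasi-inverse must be rectified to one satisfying $U^{C}\circ G=U^{D}$, and the naturality checks for this rectification are the main bookkeeping burden. Conceptually, the point to get right is that the available rigidity is bijectivity of $\theta$, not $\theta=\mathrm{id}$, which is precisely why the two-sided composites, rather than the uniqueness clause alone, are used.
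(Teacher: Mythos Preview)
Your argument is correct, but it takes a longer route than the paper's. The paper simply asserts that one may choose a quasi-inverse $G$ with $U^{C}\circ G=U^{D}$, $G\circ F=1_{\mathbb{M}^{C}}$ and $F\circ G=1_{\mathbb{M}^{D}}$ \emph{strictly}; Theorem~\ref{3-1} then yields $\psi:D\to C$ with $G=\mathbb{M}^{\psi}$, and its uniqueness clause applied to $\mathbb{M}^{\psi\circ\varphi}=G\circ F=1_{\mathbb{M}^{C}}=\mathbb{M}^{\mathrm{id}_{C}}$ gives $\psi\circ\varphi=\mathrm{id}_{C}$ (and symmetrically $\varphi\circ\psi=\mathrm{id}_{D}$) at once. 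The existence of such a strict inverse is genuine: since $U^{D}\circ F=U^{C}$, the fully faithful functor $F$ is injective on objects (if $F(M,\rho)=F(M,\rho')$ then $\mathrm{id}_{M}$ lies in the image of $F$ on $\mathrm{Hom}\bigl((M,\rho),(M,\rho')\bigr)$, forcing $\rho=\rho'$), and your own transport construction shows it is surjective on objects, because one checks $F\bigl(G(N,\sigma)\bigr)=(N,\sigma)$ exactly; hence $F$ is an isomorphism of categories. In other words, your rectified $G$ already satisfies $F\circ G=1$ and $G\circ F=1$, so your coefficient-coalgebra lemma---which is correct, and your inner-automorphism counterexample showing $\mathbb{M}^{\theta}\cong\mathrm{id}$ does not force $\theta=\mathrm{id}$ is well taken---is not needed here. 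What your detour buys is robustness: the bijectivity lemma would still conclude the proof in a setting where only $G\circ F\cong 1$ and $F\circ G\cong 1$ were available.
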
 

\par \medskip 
The above corollary can be easily proved by taking a quasi-inverse $G$ of $F$ such as $U^C\circ G=U^D$, $G\circ F=1_{\mathbb{M}^C}$ and $F\circ G=1_{\mathbb{M}^D}$. 

\par 
Dualizing of Theorem~\ref{3-1} we have also the following corollary. 

\par \medskip 
\begin{cor}
Let $A, B$ be two finite-dimensional algebras over $\boldsymbol{k}$, and $F: {}_B\mathbb{M} \longrightarrow {}_A\mathbb{M}$ be a  $\boldsymbol{k}$-linear functor. If ${}_AU\circ F={}_BU$ is satisfied for the forgetful functors ${}_AU, {}_BU$ to $\text{Vect}_{\boldsymbol{k}}^{\mathrm{f.d.}}$, then 
\begin{enumerate}
\item[$(1)$] there is a unique algebra map $\varphi : A\longrightarrow B$ such that 
$F={}_{\varphi }\mathbb{M}: {}_B\mathbb{M} \longrightarrow {}_A\mathbb{M}$, where ${}_{\varphi }\mathbb{M}$ stands for the $\boldsymbol{k}$-linear functor induced from $\varphi $, 
\item[$(2)$] if $F$ is an equivalence of $\boldsymbol{k}$-linear categories, then the algebra map $\varphi : A\longrightarrow B$ given by 
$(1)$ is an isomorphism. 
\end{enumerate}
\end{cor}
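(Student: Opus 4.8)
The plan is to reduce the statement to Theorem~\ref{3-1} by linear duality. Since $A$ is finite-dimensional, the dual space $A^{\ast}$ carries a coalgebra structure, and for a finite-dimensional left $A$-module $M$ the $\boldsymbol{k}$-linear map $\rho_M: M\longrightarrow M\otimes A^{\ast}$ determined by $\rho_M(m)=\sum_i (e_i\cdot m)\otimes e_i^{\ast}$, where $\{e_i\}$ is a basis of $A$ with dual basis $\{e_i^{\ast}\}$, makes $M$ a right $A^{\ast}$-comodule: coassociativity of $\rho_M$ translates precisely into the associativity axiom $e_i\cdot(e_j\cdot m)=(e_ie_j)\cdot m$, and counitality into $1\cdot m=m$. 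This correspondence, together with the identity map on morphisms, gives an isomorphism of $\boldsymbol{k}$-linear categories $\Theta_A: {}_A\mathbb{M}\longrightarrow\mathbb{M}^{A^{\ast}}$ which is the identity on underlying vector spaces, so that $U^{A^{\ast}}\circ\Theta_A={}_AU$; its inverse sends a comodule back to the module $a\cdot m:=m_{(0)}\langle m_{(1)},a\rangle$, finite-dimensionality of $A$ being exactly what makes the passage between the two structures bijective. The same construction gives $\Theta_B: {}_B\mathbb{M}\longrightarrow\mathbb{M}^{B^{\ast}}$.

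Next I would record the behaviour of duality on maps. An algebra map $\varphi:A\longrightarrow B$ dualizes to a coalgebra map $\varphi^{\ast}:B^{\ast}\longrightarrow A^{\ast}$, and by biduality of finite-dimensional spaces the assignment $\varphi\mapsto\varphi^{\ast}$ is a bijection from algebra maps $A\longrightarrow B$ onto coalgebra maps $B^{\ast}\longrightarrow A^{\ast}$, with $\varphi$ an isomorphism if and only if $\varphi^{\ast}$ is. A direct check on the defining formulas (namely $(\mathrm{id}_M\otimes\varphi^{\ast})\circ\rho_M^B=\rho_M^A$ relating the $B^{\ast}$-comodule attached to $M$ and the $A^{\ast}$-comodule attached to its restriction of scalars along $\varphi$) shows that $\Theta_A\circ{}_{\varphi}\mathbb{M}=\mathbb{M}^{\varphi^{\ast}}\circ\Theta_B$ as functors from ${}_B\mathbb{M}$ to $\mathbb{M}^{A^{\ast}}$, where $\mathbb{M}^{\varphi^{\ast}}$ is the functor of Theorem~\ref{3-1}.

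Given $F$ as in the statement, put $\tilde F:=\Theta_A\circ F\circ\Theta_B^{-1}:\mathbb{M}^{B^{\ast}}\longrightarrow\mathbb{M}^{A^{\ast}}$. It is $\boldsymbol{k}$-linear and satisfies $U^{A^{\ast}}\circ\tilde F={}_AU\circ F\circ\Theta_B^{-1}={}_BU\circ\Theta_B^{-1}=U^{B^{\ast}}$. Applying Theorem~\ref{3-1} with $C=B^{\ast}$ and $D=A^{\ast}$ yields a unique coalgebra map $\psi:B^{\ast}\longrightarrow A^{\ast}$ with $\tilde F=\mathbb{M}^{\psi}$; let $\varphi:A\longrightarrow B$ be the unique algebra map with $\varphi^{\ast}=\psi$. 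Then $\Theta_A\circ{}_{\varphi}\mathbb{M}\circ\Theta_B^{-1}=\mathbb{M}^{\varphi^{\ast}}=\mathbb{M}^{\psi}=\tilde F=\Theta_A\circ F\circ\Theta_B^{-1}$, and cancelling the isomorphisms $\Theta_A,\Theta_B$ gives $F={}_{\varphi}\mathbb{M}$. If $\varphi':A\longrightarrow B$ is another algebra map with $F={}_{\varphi'}\mathbb{M}$, the same identity yields $\mathbb{M}^{(\varphi')^{\ast}}=\tilde F=\mathbb{M}^{\psi}$, so $(\varphi')^{\ast}=\psi=\varphi^{\ast}$ by the uniqueness in Theorem~\ref{3-1}, hence $\varphi'=\varphi$; this proves (1). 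For (2), if $F$ is an equivalence of $\boldsymbol{k}$-linear categories then so is $\tilde F$, so Corollary~\ref{3-3} makes $\psi$ an isomorphism of coalgebras, and therefore $\varphi$, being the algebra map dual to $\psi$, is an isomorphism.

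Thus the only real work is bookkeeping: verifying that $\Theta_A$ is a well-defined isomorphism of $\boldsymbol{k}$-linear categories intertwining the forgetful functors (the one slightly delicate point, where finite-dimensionality of $A$ is used) and that restriction of scalars along $\varphi$ corresponds under $\Theta$ to ``corestriction of coscalars'' along $\varphi^{\ast}$. Both are routine finite-dimensional linear algebra, and once they are in place the statement follows formally from Theorem~\ref{3-1} and Corollary~\ref{3-3}; I do not expect any genuine obstacle beyond these verifications.
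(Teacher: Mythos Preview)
Your proposal is correct and is exactly the approach the paper indicates: the paper gives no detailed proof for this corollary, merely introducing it with the sentence ``Dualizing of Theorem~\ref{3-1} we have also the following corollary,'' and your argument carries out precisely that dualization (passing from ${}_A\mathbb{M}$ to $\mathbb{M}^{A^{\ast}}$ via $\Theta_A$, applying Theorem~\ref{3-1} and Corollary~\ref{3-3}, and transporting back). The verifications you flag---that $\Theta_A$ is an isomorphism intertwining the forgetful functors and that ${}_{\varphi}\mathbb{M}$ corresponds to $\mathbb{M}^{\varphi^{\ast}}$---are indeed routine and there is no hidden obstacle.
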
 

\par \medskip 
Now, we show the main theorem in the present paper: 

\par \medskip 
\begin{thm}\label{3-5}
Let $A, B$ be weak bialgebras over $\boldsymbol{k}$, and 
$F=(F, \bar{\phi}^F, \bar{\omega}^F) : \mathbb{M}^A \longrightarrow \mathbb{M}^B$ be a strong $\boldsymbol{k}$-linear comonoidal functor. 
Suppose that $U^B\circ F=U^A$ as $\boldsymbol{k}$-linear monoidal categories. 
Then there is a unique weak bialgebra map $\varphi : A\longrightarrow B$ such that $F=\mathbb{M}^{\varphi}$ as $\boldsymbol{k}$-linear monoidal categories and $\bar{\omega}^F=\varphi |_{A_s}: A_s \longrightarrow B_s$ is an algebra isomorphism. 
Furthermore, $\hat{U}^B\circ F={}_{\varphi_s^{-1}}\Mod _{\varphi_s^{-1}}\circ \hat{U}^A$ is satisfied, where 
$\hat{U}^A: \mathbb{M}^A \longrightarrow {}_{A_s}\Mod_{A_s},\ \hat{U}^B: \mathbb{M}^B \longrightarrow {}_{B_s}\Mod_{B_s}$ are forgetful functors. 
\end{thm}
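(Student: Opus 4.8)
The plan is to reduce to the coalgebra reconstruction Theorem~\ref{3-1} and then recover the algebra structure from the comonoidal data. Forgetting monoidal structures, $F$ is a $\boldsymbol{k}$-linear functor $\mathbb{M}^A\to\mathbb{M}^B$ with $U^B\circ F=U^A$; applying Theorem~\ref{3-1} to the underlying coalgebras of $A$ and $B$ yields a unique coalgebra map $\varphi\colon A\to B$ with $F=\mathbb{M}^{\varphi}$ as $\boldsymbol{k}$-linear functors, and the proof of Theorem~\ref{3-1} moreover gives $\rho^F_M=(\mathrm{id}_M\otimes\varphi)\circ\rho_M$ for every $(M,\rho_M)\in\mathbb{M}^A$; in particular every finite-dimensional subcoalgebra $P\subseteq A$, viewed as the comodule $(P,\rho_P)$, is carried to $(P,(\mathrm{id}\otimes\varphi)\circ\Delta_P)$. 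Uniqueness of $\varphi$ as a weak bialgebra map is then immediate from the uniqueness clause of Theorem~\ref{3-1}. It remains to prove: (i) $\bar\omega^F=\varphi|_{A_s}$ is an algebra isomorphism $A_s\to B_s$; (ii) $\varphi$ is an algebra map; (iii) $\hat U^B\circ F={}_{\varphi_s^{-1}}\Mod_{\varphi_s^{-1}}\circ\hat U^A$.

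For (i) I would look at the unit object. Since the unit of $\mathbb{M}^A$ is $(A_s,\Delta_s)$ (Lemma~\ref{2-4}), we have $F(A_s,\Delta_s)=(A_s,(\mathrm{id}\otimes\varphi)\circ\Delta_s)$, and as $F$ is strong the component $\bar\omega^F\colon F(A_s,\Delta_s)\to(B_s,\Delta_s)$ is an isomorphism in $\mathbb{M}^B$, hence a $\boldsymbol{k}$-linear bijection $A_s\to B_s$. Unravelling the condition that $\bar\omega^F$ be a $B$-comodule map, exactly as in Remark~\ref{3-2}, together with the left and right unit coherence axioms of the comonoidal functor $F$, should force $\bar\omega^F=\varphi|_{A_s}$, $\varphi(A_s)=B_s$, $\varphi(1_A)=1_B$, and the multiplicativity of $\varphi$ on $A_s$; thus $\varphi_s:=\varphi|_{A_s}\colon A_s\to B_s$ is an algebra isomorphism.

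For (ii) I would re-run the method of Theorem~\ref{3-1} with the multiplication in play. Given $a,b\in A$, choose finite-dimensional subcoalgebras $P\ni a$ and $Q\ni b$, enlarged (using the fundamental theorem for coalgebras and finite dimensionality of $A_s$) so that $A_s\subseteq P\cap Q$; then $R:=\overline{PQ}$, the $\boldsymbol{k}$-span of all products $pq$, is again a finite-dimensional subcoalgebra because $\Delta$ is multiplicative (WH1), and it contains $P$, $Q$ and $ab$. Using Lemma~\ref{2-3}(4) one checks that the $(A_s,A_s)$-bimodule structure of Lemma~\ref{2-5} on a subcoalgebra is just left and right multiplication inside $A$, so the multiplication descends to a morphism $\bar\mu_{P,Q}\colon P\circledast Q\to R$ in $\mathbb{M}^A$, $p\otimes_{A_s}q\mapsto pq$. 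Applying $F$, composing with the isomorphism $\bar\phi^F_{P,Q}\colon F(P\circledast Q)\xrightarrow{\ \sim\ }F(P)\circledast F(Q)$, and exploiting naturality of $\bar\phi^F$ against the comodule endomorphisms $(\gamma\otimes\mathrm{id})\circ\Delta$ to strip off tensor legs --- the device used for part (B) in the proof of Theorem~\ref{3-1} --- I expect to obtain $\varphi(pq)=\varphi(p)\varphi(q)$ for all $p\in P$, $q\in Q$. Since $a,b$ were arbitrary this gives multiplicativity of $\varphi$, and together with $\varphi(1_A)=1_B$ shows $\varphi$ is a weak bialgebra map; a by-product is that $\bar\phi^F$ coincides with the canonical comonoidal structure of $\mathbb{M}^{\varphi}$, so $F=\mathbb{M}^{\varphi}$ as $\boldsymbol{k}$-linear monoidal functors via Lemma~\ref{2-7}(2).

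Finally, (iii) follows from Lemma~\ref{2-7}(3): since $\varphi$ is a weak bialgebra map with $\varphi_s$ bijective, $\hat U^B\circ F=\hat U^B\circ\mathbb{M}^{\varphi}={}_{\varphi_s^{-1}}\Mod_{\varphi_s^{-1}}\circ\hat U^A$. The main obstacle is step (ii): because the tensor products in $\mathbb{M}^A$ and $\mathbb{M}^B$ are taken over the different algebras $A_s$ and $B_s$ --- precisely the feature absent in the classical case --- the comonoidal structure morphisms $\bar\phi^F_{M,N}$ are a priori only unknown $B$-comodule isomorphisms $M\otimes_{A_s}N\to M\otimes_{B_s}N$, and the delicate point is to extract the algebra identity for $\varphi$ from their existence, naturality and coherence rather than from any explicit formula. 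Step (i) carries a milder version of the same difficulty.
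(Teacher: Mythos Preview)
Your overall strategy --- reduce to Theorem~\ref{3-1}, then recover the algebra structure from the comonoidal data --- is exactly the paper's, and steps (i) and (iii) are essentially right. But there is a genuine gap, and it is precisely the one you flag yourself at the end: you treat $\bar\phi^F_{M,N}$ as ``a priori only unknown $B$-comodule isomorphisms'' and try to squeeze multiplicativity of $\varphi$ out of abstract naturality and coherence. That is not how the paper proceeds, and your outline for (ii) does not supply an argument that actually closes.

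The point you are missing is that the hypothesis $U^B\circ F=U^A$ is an equality of \emph{monoidal} functors, not merely of functors. Unwinding the monoidal part gives, for all $M,N\in\mathbb{M}^A$,
\[
U^B\bigl((\bar\phi^F_{M,N})^{-1}\bigr)\circ\phi^{U^B}_{F(M),F(N)}\;=\;\phi^{U^A}_{M,N},
\qquad
U^B\bigl((\bar\omega^F)^{-1}\bigr)\circ\omega^{U^B}\;=\;\omega^{U^A}.
\]
Since $\phi^{U^A}$ and $\phi^{U^B}$ are the natural projections $M\otimes N\to M\otimes_{A_s}N$ and $M\otimes N\to M\otimes_{B_s}N$, the first identity says that $\bar\phi^F_{M,N}$ is \emph{induced by $\mathrm{id}_{M\otimes N}$}; concretely $\bar\phi^F_{P,P'}(p\otimes_{A_s}p')=p\otimes_{B_s}p'$. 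The second identity gives $\bar\omega^F(1)=1$ directly. Once $\bar\phi^F$ is pinned down in this way, step (ii) becomes an explicit diagram chase (the paper's diagram~\eqref{eq3-1}), and one reads off $\varphi(pp')=\varphi(p)\varphi(p')$ elementwise; the ``strip off tensor legs via $(\gamma\otimes\mathrm{id})\circ\Delta$'' device from Theorem~\ref{3-1} is not needed here and would not by itself yield the product identity, since naturality of $\bar\phi^F$ in each variable only commutes it past endomorphisms of $P$ and $P'$ separately and tells you nothing about how $\bar\phi^F$ interacts with $\bar\mu_{P,P'}$.

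In short: use the monoidal half of $U^B\circ F=U^A$ first. It converts $\bar\phi^F$ from an unknown isomorphism into the canonical map $M\otimes_{A_s}N\to M\otimes_{B_s}N$, after which both (i) and (ii) become concrete computations rather than coherence puzzles.
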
 
\begin{proof} 
By Theorem~\ref{3-1} there is a unique coalgebra map  $\varphi : A\longrightarrow B$ such that  $F=\mathbb{M}^{\varphi}$ as $\boldsymbol{k}$-linear functors. 
Since $U^{B}\circ F=U^{A}$ as $\boldsymbol{k}$-linear monoidal functors, for all $M, N\in \mathbb{M}^{A}$ the composition 
\begin{align*}
U^B\bigl( F(M)\bigr) \otimes U^B\bigl( F(N)\bigr) &\xrightarrow{\ \phi^{U^B}_{F(M), F(N)}\ } 
U^B\bigl( F(M)\otimes _{B_s}F(N)\bigr)  \\ 
&\qquad\qquad \xrightarrow{\ U^B\bigl( (\bar{\phi}^F_{M,N})^{-1}\bigr) \ } 
U^B\bigl( F(M\otimes _{A_s}N)\bigr) 
\end{align*}
coincides with the natural projection $\phi ^{U^A}_{M,N}: U^A(M)\otimes U^A(N) \longrightarrow U^A(M\otimes _{A_s}N)$. 
This implies that the diagram 

{\setlength{\unitlength}{0.7cm}
\begin{center}
\begin{picture}(11,4.5)
\put(0.2, 3.5){$M\otimes N$}
\put(7,3.5){$M\otimes N$}
\put(-0.3,0.5){$M\otimes _{A_s}A$}
\put(6.8, 0.5){$M\otimes _{B_s}N$}
\put(3.8, 3.8){\small $\text{id}_{M\otimes N}$}
\put(3.8, 0.9){\small $\bar{\phi}^F_{M,N}$}
\put(-1.8, 2.2){\small natural proj.}
\put(8.2, 2.2){\small natural proj.}
\put(2.2, 3.6){\vector(1,0){4.5}}
\put(2.1, 0.6){\vector(1,0){4.5}}
\put(1.2, 3.2){\vector(0,-1){2}}
\put(7.9, 3.2){\vector(0,-1){2}}
\end{picture}
\end{center}
}

\noindent 
commutes. 
This means that the map $\bar{\phi}^F_{M,N} : F(M\otimes _{A_s}N)\longrightarrow F(M)\otimes _{B_s}F(N)$ 
is induced from the identity map $\text{id}_{M\otimes N}$. 
\par 
Let us show that $\bar{\omega}^F=\varphi |_{A_s}: A_s \longrightarrow B_s$ is an algebra isomorphism. 
Since $U^{B}\circ F=U^{A}$ as $\boldsymbol{k}$-linear monoidal functors, the composition 
$$U^{B}\bigl( (\bar{\omega}^F)^{-1}\bigr) \circ \omega ^{U^{B}} : \boldsymbol{k}\longrightarrow (U^{B}\circ F)(A_s)=A_s$$
coincides with $\omega ^{U^{A}}: \boldsymbol{k}\longrightarrow A_s$. 
Thus $(\bar{\omega}^F)^{-1}\circ \omega ^{U^{B}}= \omega ^{U^{A}}$ as maps, and hence $\bar{\omega }^F(1)=1$. 
\par 
Since $\bar{\omega}^F: F(A_s)\longrightarrow B_s$ is a right $B$-comodule map and $F(A_s)=\mathbb{M}^{\varphi}(A_s)$, 
the following diagram commutes. 

{\setlength{\unitlength}{0.7cm}
\begin{center}
\begin{picture}(8.5,7)
\put(0.2, 6){$M\otimes N$}
\put(7, 6){$M\otimes N$}
\put(0, 3){$M\otimes _{A_s}A$}
\put(0, 0.5){$M\otimes _{A_s}A$}
\put(6.8, 0.5){$M\otimes _{B_s}N$}
\put(4, 6.3){\small $\bar{\omega}^F$}
\put(3.8, 0.8){\small $\bar{\omega}^F\otimes \text{id}$}
\put(-0.5, 4.7){\small $\Delta_A|_{A_s}$}
\put(-0.5, 1.9){\small $\text{id}\otimes \varphi$}
\put(8.1, 3.2){\small $\Delta_B|_{B_s}$}
\put(2.2, 6.1){\vector(1,0){4.5}}
\put(2.3, 0.6){\vector(1,0){4.3}}
\put(1, 5.7){\vector(0,-1){2}}
\put(1, 2.5){\vector(0,-1){1.5}}
\put(7.9, 5.7){\vector(0,-1){4.7}}
\end{picture}
\end{center}
}

\noindent 
Thus $\Delta _B\bigl( \bar{\omega }^F(y)\bigr) =\bar{\omega}^F(y_{(1)})\otimes \varphi (y_{(2)})$ for all $y\in A_s$. 
Applying $\varepsilon_B\otimes \text{id}$ to both sides, we have 
\begin{align*}
\bar{\omega }^F(y)
&=\varepsilon_B\bigl( \bar{\omega}^F(y_{(1)})\bigr) \varphi (y_{(2)}) \\ 
&=\varepsilon_A(y_{(1)}) \varphi (y_{(2)})\\ 
&=\varphi (y). 
\end{align*}
This implies that $\bar{\omega }^F=\varphi |_{A_s} : A_s\longrightarrow B_s$, and hence $\varphi (1)=\bar{\omega }^F(1)=1$. 
\par 
Next we show that $\bar{\omega }^F$ preserves products. 
Since $F=(F, \bar{\phi}^F, \bar{\omega}^F)$ is a comonoidal functor, the following diagram commutes for all $M\in \mathbb{M}^{A}$: 

{\setlength{\unitlength}{0.7cm}
\begin{center}
\begin{picture}(12,4.5)
\put(0.2,3.5){$F(A_s\otimes _{A_s}M)$}
\put(9,3.5){$F(M)$}
\put(-0.2, 0.5){$F(A_s)\otimes_{B_s} F(M)$}
\put(8.5, 0.5){$B_s\otimes_{B_s} F(M)$}
\put(5.5, 3.9){\small $F(l_M^{A})$}
\put(5.1, 0.8){\small $\bar{\omega} ^F\otimes_{B_s} \text{id}$}
\put(0.3, 2){\small $\bar{\phi}^F_{A_s, M}$}
\put(9.9, 2){\small $l^{B}_M$}
\put(3.7, 3.6){\vector(1,0){5}}
\put(4.2, 0.6){\vector(1,0){4}}
\put(1.8, 3.1){\vector(0,-1){2}}
\put(9.7, 1.1){\vector(0,1){2}}
\end{picture}
\end{center}
}

\noindent 
In particular, the following diagram commutes: 

{\setlength{\unitlength}{0.7cm}
\begin{center}
\begin{picture}(11,7)
\put(-0.7, 6){$F(A_s\otimes _{A_s}A_s)$}
\put(8, 6){$F(A_s)\otimes _{B_s}F(A_s)$}
\put(0.2, 3){$F(A_s)$}
\put(9, 3){$B_s\otimes _{B_s}F(A_s)$}
\put(0.7, 0.5){$B_s$}
\put(8.8, 0.5){$B_s\otimes _{B_s}B_s$}
\put(4.8, 6.4){\small $\bar{\phi }_{A_s, A_s}^F$}
\put(4.8, 3.6){\small $l_{F(A_s)}^{B}$}
\put(4.9, 0.9){\small $l_{B_s}^{B}$}
\put(-0.5, 4.7){\small $F(l_{A_s}^{A})$}
\put(10.2, 4.7){\small $\bar{\omega}^F\otimes _{B_s}\text{id}$}
\put(0.2, 1.9){\small $\bar{\omega }^F$}
\put(10.1, 1.9){\small $\text{id}\otimes _{B_s}\bar{\omega}^F$}
\put(2.8, 6.1){\vector(1,0){5}}
\put(8.7, 3.2){\vector(-1,0){6.7}}
\put(8.5, 0.6){\vector(-1,0){7}}
\put(1, 5.7){\vector(0,-1){2}}
\put(1, 2.7){\vector(0,-1){1.5}}
\put(9.9, 5.5){\vector(0,-1){1.7}}
\put(9.9, 2.7){\vector(0,-1){1.7}}
\end{picture}
\end{center}
}

\noindent 
It follows that 
$$\bar{\omega }^F(y_1y_2)=\bar{\omega }^F(y_1)\bar{\omega }^F(y_2)$$
for all $y_1, y_2\in A_s$ 
since $\bar{\phi }_{A_s, A_s}^F$ is induced from the identity map $\text{id}_{A_s\otimes A_s}$. 
Therefore, it is shown that $\bar{\omega }^F=\varphi|_{A_s}$ is an algebra isomorphism. 
\par 
Next let us show that $\varphi$ is a weak bialgebra map. 
For this it is enough to show that $\varphi$ preserves products. 
For two subspaces $P, P^{\prime}$ of $A$, let $PP^{\prime}$ denote the subspace of $A$ spanned by the set $\{ \ pp^{\prime}\ | \ p\in P,\ p^{\prime}\in P^{\prime}\ \}$. 
Let $\mu _{P,P^{\prime}}: P\otimes P^{\prime} \longrightarrow PP^{\prime}$ be the restriction of the product $\mu_A$ of $A$. 
Then $\mu _{P,P^{\prime}}$ induces a $\boldsymbol{k}$-linear map $\bar{\mu}_{P, P^{\prime}}: P\otimes _{A_s}P^{\prime} \longrightarrow PP^{\prime}$ since the equation $(p\cdot y)p^{\prime}=p(y\cdot p^{\prime})$ holds for $p\in P,\ y\in A_s,\ p^{\prime}\in P^{\prime}$. 
\par 
Let $a, a^{\prime}\in A$, and let $P, P^{\prime}$ be finite-dimensional subcoalgebras of $A$ such that $a\in P$,\ $a^{\prime}\in P^{\prime}$. 
Then $PP^{\prime}$ is also a finite-dimensional subcoalgebra of $A$ containing $aa^{\prime}$, and the map 
$\bar{\mu}_{P, P^{\prime}}$ is a right $A$-comodule map. 
Let $\varphi_P, \varphi_{P^{\prime}}$ be the coalgebra maps defined in the proof of Theorem~\ref{3-1}. 
We will show that the following diagram commutes. 

\begin{equation}\label{eq3-1}
{\setlength{\unitlength}{0.3mm}
\begin{picture}(300, 40)(10, 0)
\put(-10, 25){$F(P\otimes_{A_s} P^{\prime})$}
\put(125, 25){$F(P)\otimes_{B_s} F(P^{\prime})$}
\put(285, 25){$B\otimes_{B_s} B$}
\put(0, -25){$F(PP^{\prime})$}
\put(300, -25){$B$}
\put(80, 35){\small $\bar{\phi}_{P,P^{\prime}}^F$}
\put(222, 35){\small $\varphi_P\otimes_{B_s} \varphi_{P^{\prime}}$}
\put(-22, 5){\small $F(\bar{\mu}_{P, P^{\prime}})$}
\put(310, 4){\small $\bar{\mu}_B$}
\put(155, -17){\small $\varphi_{PP^{\prime}}$}
\put(67, 28){\vector(1,0){50}}
\put(222, 28){\vector(1,0){55}}
\put(50, -23){\vector(1,0){240}}
\put(25, 20){\vector(0,-1){30}}
\put(305, 20){\vector(0,-1){30}}
\end{picture}
}
\end{equation}


\vspace{1cm}\noindent 
Here, $\bar{\mu}_B$ is the induced map from the product $\mu_B$ of $B$. 
Since $\varphi_P, \varphi_{P^{\prime}}$ are right $B$-comodule maps by Remark~\ref{3-2}, the map $\varphi_P\otimes_{B_s} \varphi_{P^{\prime}}$ is well-defined. 
Since $F=\Mod^{\varphi }$ and $\hat{U}^B\circ \Mod^{\varphi }={}_{\varphi_s^{-1}}\Mod_{\varphi_s^{-1}}\circ\hat{U}^A$ as $\boldsymbol{k}$-linear monoidal functors, 
one can verify that $\bar{\phi}_{P,P^{\prime}}^F: F(P\otimes_{A_s} P^{\prime})\longrightarrow F(P)\otimes_{B_s} F(P^{\prime})$ is a $\boldsymbol{k}$-linear map given by 
\begin{equation}\label{eq3-2}
\bar{\phi}_{P,P^{\prime}}^F(p\otimes _{A_s}p^{\prime})=p\otimes _{B_s}p^{\prime}\qquad (p\in F(P)=P,\ p^{\prime}\in F(P^{\prime})=P^{\prime}). 
\end{equation}

Let $(F(P)\otimes _{B_s}F(P^{\prime}),\ \rho _{P,P^{\prime}}^F)$ be the tensor product in $\Mod^B$ of right comodules $(F(P), \rho_P^F)$ and $(F(P^{\prime}), \rho_{P^{\prime}}^F)$. 
Here, $\rho _{P,P^{\prime}}^F$ is the right coaction given by 
$$\rho _{P,P^{\prime}}^F(m\otimes _{B_s}n)=(m_{(0)}\otimes _{B_s}n_{(0)})\otimes m_{(1)}n_{(1)}\qquad (m\in F(P),\ n\in F(P^{\prime})), $$
and $\rho_P^F(m)=m_{(0)}\otimes m_{(1)},\ \rho_{P^{\prime}}^F(n)=n_{(0)}\otimes n_{(1)}$. 
Since $\bar{\phi}_{P,P^{\prime}}^F$ is a right $B$-comodule map from $(F(P\otimes_{A_s}P^{\prime}), \ \rho_{P\otimes_{A_s}P^{\prime}}^F)$ to $(F(P)\otimes _{B_s}F(P^{\prime}),\ \rho _{P,P^{\prime}}^F)$, the following diagram commutes. 

{\setlength{\unitlength}{0.7cm}
\begin{center}
\begin{picture}(13,4.5)
\put(0.2,3.5){$F(P\otimes _{A_s}P^{\prime})$}
\put(9,3.5){$F(P\otimes _{A_s}P^{\prime})\otimes B$}
\put(-0.2, 0.5){$F(P)\otimes _{B_s}F(P^{\prime})$}
\put(8.5, 0.5){$\bigl( F(P)\otimes _{B_s}F(P^{\prime})\bigr) \otimes B$}
\put(5.5, 3.9){\small $\rho_{P\otimes _{A_s}P^{\prime}}^F$}
\put(5.6, 0.9){\small $\rho _{P,P^{\prime}}^F$}
\put(0.5, 2.2){\small $\bar{\phi }_{P,P^{\prime}}^F$}
\put(11, 2.2){\small $\bar{\phi }_{P,P^{\prime}}^F\otimes \text{id}_B$}
\put(3.7, 3.6){\vector(1,0){5}}
\put(4.2, 0.6){\vector(1,0){4}}
\put(1.8, 3.1){\vector(0,-1){2}}
\put(10.7, 3.1){\vector(0,-1){2}}
\end{picture}
\end{center}
}


On the other hand, there is a $(B_s, B_s)$-bimodule map $\chi _{P,P^{\prime}} : (P\otimes B)\otimes _{B_s}(P^{\prime}\otimes B) \longrightarrow (P\otimes _{B_s}P^{\prime})\otimes B$, which is defined by 
$$\chi _{P, P^{\prime}}\bigl( (p\otimes b)\otimes _{B_s}(p^{\prime}\otimes b^{\prime})\bigr) 
=(p\otimes _{B_s}p^{\prime})\otimes bb^{\prime}\qquad (p\in P,\ p^{\prime}\in P^{\prime},\ b, b^{\prime}\in B).$$
We also define a $\boldsymbol{k}$-linear map $\varepsilon_{P, P^{\prime}}: P\otimes _{A_s}P^{\prime}\longrightarrow \boldsymbol{k}$ by 
$$\varepsilon_{P, P^{\prime}}(p\otimes _{A_s}p^{\prime})=\varepsilon (p1_{(1)})\varepsilon (1_{(2)}p^{\prime})\qquad (p\in P,\ p^{\prime}\in P^{\prime}).$$
Then it can be shown that the following diagram commutes. 

{\setlength{\unitlength}{0.3mm}
\begin{center}
\begin{picture}(500, 245)(5,-115)
\put(0, 100){$F(P\otimes _{A_s}P^{\prime})$}
\put(140, 100){$F(P)\otimes _{B_s}F(P^{\prime})$}
\put(450, 100){$B\otimes _{B_s}B$}
\put(125, 50){$F(P)\otimes _{B_s}F(P^{\prime})\otimes B$}
\put(285, 50){$(F(P)\otimes B)\otimes _{B_s}(F(P^{\prime})\otimes B)$}
\put(135, 0){$F(P\otimes _{A_s}P^{\prime})\otimes B$}
\put(340, 0){$\boldsymbol{k}\otimes B$}
\put(150, -50){$F(PP^{\prime})\otimes B$}
\put(13, -100){$F(PP^{\prime})$}
\put(475, -100){$B$}
\put(420, 0){($\ast $)}
\put(100, 110){\small $\bar{\phi}_{P,P^{\prime}}^F$}
\put(330, 110){\small $\varphi_P\otimes _{B_s}\varphi_{P^{\prime}}$}
\put(65, 42){\small $\rho_{P\otimes _{A_s}P^{\prime}}^F$}
\put(185, 80){\small $\rho_{P,P^{\prime}}^F$}
\put(186, 25){\small $\bar{\phi}_{P,P^{\prime}}^F\otimes \text{id}$}
\put(255, 59){\small $\chi_{P,P^{\prime}}$}
\put(230, 83){\small $\rho_P^F\otimes _{B_s}\rho_{P^{\prime}}^F$}
\put(320, 80){\small $(F(\varepsilon_P)\otimes \text{id})_{B_s}(F(\varepsilon_{P^{\prime}})\otimes \text{id})$}
\put(250, 8){\small $F(\varepsilon_{P,P^{\prime}})\otimes \text{id}$}
\put(400, -42){\small $\cong$}
\put(185, -22){\small $F(\bar{\mu}_{P,P^{\prime}})\otimes \text{id}$}
\put(280, -38){\small $F(\varepsilon_{PP^{\prime}})\otimes \text{id}$}
\put(90, -60){\small $\rho _{PP^{\prime}}^F$}
\put(-7, 0){\small $F(\bar{\mu} _{P,P^{\prime}})$}
\put(250, -90){\small $\varphi_{PP^{\prime}}$}
\put(484, 0){\small $\bar{\mu }_{B}$}
\put(77, 103){\vector(1,0){55}}
\put(237, 103){\vector(1,0){200}}
\put(62, -97){\vector(1,0){400}}
\put(41, 88){\vector(0,-1){168}}
\put(480, 88){\vector(0,-1){168}}
\put(180, 90){\vector(0,-1){28}}
\put(180, 12){\vector(0,1){28}}
\put(180, -9){\vector(0,-1){26}}
\put(65, 88){\vector(1,-1){70}}
\put(67, -88){\vector(2,1){75}}
\put(225, -45){\vector(3,1){110}}
\put(240, 3){\vector(1,0){90}}
\put(281, 53){\vector(-1,0){33}}
\put(380, -5){\vector(1,-1){82}}
\put(240, 98){\vector(2,-1){62}}
\put(370, 63){\vector(2,1){72}}
\end{picture}
\end{center}
}

\noindent 
In fact, the commutativity of ($\ast $) in the above diagram comes from the following three facts: 
\begin{enumerate}
\item[(i)] $\bar{\phi }^F_{M,N}: F(M\otimes _{A_s}N)\longrightarrow F(M)\otimes _{B_s}F(N)$ is an isomorphism, 
\item[(ii)] the equation \eqref{eq3-2} holds, 
\item[(iii)] for $p\in P,\ p^{\prime}\in P^{\prime}$, 
\begin{enumerate}
\item[$\bullet$] 
$p\otimes _{A_s}p^{\prime}=(p\cdot 1)\otimes _{A_s}p^{\prime}
=\bigl( p\cdot 1_{(1)}\varepsilon _s(1_{(2)})\bigr) \otimes _{A_s}p^{\prime}
=p1_{(1)}\otimes _{A_s}\varepsilon_s(1_{(2)})p^{\prime}$,  
\item[$\bullet$] $\varepsilon \bigl(\varepsilon_s(1_{(2)})p^{\prime}\bigr) =\varepsilon (1_{(2)}p^{\prime})$ by Lemma~\ref{2-2}(2).
\end{enumerate}
\end{enumerate}

\noindent 
Thus, the diagram \eqref{eq3-1} commutes, and therefore $\varphi$ preserves products. 
\par 
To complete the proof we need to verify that $F=\mathbb{M}^{\varphi}$ as $\boldsymbol{k}$-linear comonoidal functors. 
This is an easy consequence of the proof of Lemma~\ref{2-7}(2) since $\bar{\phi}^F_{M,N}$ is induced from the identity map $\text{id}_{M\otimes N}$ for all $M, N\in \mathbb{M}^{A}$. 
\end{proof} 

\par \medskip 
By Corollary~\ref{3-3} and Theorem~\ref{3-5} we have: 

\par \medskip 
\begin{cor}\label{3-6}
Let $A, B$ be weak bialgebras over $\boldsymbol{k}$, and 
$F=(F, \bar{\phi}^F, \bar{\omega}^F) : \mathbb{M}^A \longrightarrow \mathbb{M}^B$ be a strong $\boldsymbol{k}$-linear comonoidal functor satisfying $U^B\circ F=U^A$ as $\boldsymbol{k}$-linear monoidal categories. 
If $F$ is a $\boldsymbol{k}$-linear monoidal equivalence, then 
the weak bialgebra map $\varphi : A\longrightarrow B$ determined in Theorem~\ref{3-5} is an isomorphism of weak bialgebras. 
\end{cor}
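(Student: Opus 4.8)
The plan is to combine Theorem~\ref{3-5} with Corollary~\ref{3-3}. By Theorem~\ref{3-5} there is a unique weak bialgebra map $\varphi : A \longrightarrow B$ with $F = \mathbb{M}^{\varphi}$; in particular $\varphi$ is both an algebra map and a coalgebra map. Since a $\boldsymbol{k}$-linear monoidal equivalence has an underlying $\boldsymbol{k}$-linear equivalence of categories, the functor $F : \mathbb{M}^A \longrightarrow \mathbb{M}^B$ is an equivalence of $\boldsymbol{k}$-linear categories, and by hypothesis $U^B \circ F = U^A$. Viewing $A$ and $B$ merely as coalgebras, the coalgebra map furnished by Theorem~\ref{3-1} coincides with $\varphi$ by the uniqueness clause there; hence Corollary~\ref{3-3} applies and tells us that $\varphi$ is an isomorphism of coalgebras, so in particular $\varphi$ is bijective.

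First I would record the elementary fact that a bijective weak bialgebra map is automatically an isomorphism of weak bialgebras: the set-theoretic inverse $\varphi^{-1} : B \longrightarrow A$ is $\boldsymbol{k}$-linear, and being the two-sided inverse of the bijective algebra homomorphism $\varphi$ it is itself an algebra homomorphism, while being the two-sided inverse of the bijective coalgebra homomorphism $\varphi$ it is itself a coalgebra homomorphism. Hence $\varphi^{-1}$ is again a weak bialgebra map, so $\varphi$ is an isomorphism of weak bialgebras, as desired.

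Since all the technical content has already been carried out in Theorem~\ref{3-5} (the compatibility of $\varphi$ with products) and in Corollary~\ref{3-3} (bijectivity), there is no real obstacle remaining; the only point to watch is that the coalgebra map produced by Corollary~\ref{3-3} is literally the same $\varphi$ as the weak bialgebra map of Theorem~\ref{3-5}, which is immediate from the uniqueness in Theorem~\ref{3-1}. As an alternative one could argue more symmetrically: choose a quasi-inverse $G$ of $F$ with $U^A \circ G = U^B$ as in the proof of Corollary~\ref{3-3}, note that $G$ inherits a strong comonoidal structure from the monoidal equivalence, apply Theorem~\ref{3-5} to $G$ to obtain a weak bialgebra map $\psi : B \longrightarrow A$, and then deduce $\psi \circ \varphi = \mathrm{id}_A$ and $\varphi \circ \psi = \mathrm{id}_B$ from the uniqueness statement of Theorem~\ref{3-1} applied to the identity functors. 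Either route completes the proof.
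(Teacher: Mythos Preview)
Your proposal is correct and follows essentially the same approach as the paper: the paper simply states that Corollary~\ref{3-6} follows by combining Corollary~\ref{3-3} with Theorem~\ref{3-5}, and your write-up is a careful unpacking of exactly that combination (including the uniqueness argument ensuring the coalgebra map of Corollary~\ref{3-3} coincides with the weak bialgebra map of Theorem~\ref{3-5}). Your alternative route via a quasi-inverse $G$ is also in the spirit of the paper's remark after Corollary~\ref{3-3}.
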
 

\par \bigskip 
\section{Categorical aspects of indecomposable weak bialgebras}
\par 
Let $A=(A, \Delta _A, \varepsilon _A)$ and $B=(B, \Delta _B, \varepsilon _B)$ be two weak bialgebras over $\boldsymbol{k}$, and set $H=A\oplus B$. 
Then, $H$ has a weak bialgebra structure such that 
the algebra structure is the product and the coalgebra structure is the direct sum of $A$ and $B$. 
The target and source counital maps $\varepsilon_t$ and $\varepsilon_s$ are given by 
\begin{align*}
\varepsilon_t(x)&=(\varepsilon _A)_t(a)+(\varepsilon _B)_t(b),\\ 
\varepsilon_s(x)&=(\varepsilon _A)_s(a)+(\varepsilon _B)_s(b)
\end{align*}
for all $x=a+b\in H,\ a\in A,\ b\in B$, 
where $(\varepsilon _A)_t, (\varepsilon _A)_s$ are the target and source counital maps of $A$, and $(\varepsilon _B)_t, (\varepsilon _B)_s$ are that of $B$, respectively. 
Moreover, the target and source subalgebras are given as follows: 
\begin{align*}
H_t & =\varepsilon _t(H)=(\varepsilon _A)_t(A)+(\varepsilon _B)_t(B)=A_t+B_t,\\ 
H_s & =\varepsilon _s(H)=(\varepsilon _A)_s(A)+(\varepsilon _B)_s(B)=A_s+B_s. 
\end{align*}

\noindent 
We call the above weak bialgebra $H$ the direct sum of $A$ and $B$. 
\par 
A weak bialgebra $H$ is called \textit{indecomposable} if there are no weak bialgebras $A$ and $B$ such that $H\cong A\oplus B$. 
Any finite-dimensional weak bialgebra can be decomposed into the finitely many direct sum of indecomposable ones. More precisely, we have: 

\par \medskip 
\begin{thm}
Let $H$ be a finite-dimensional weak bialgebra over $\boldsymbol{k}$. Then
\begin{enumerate}
\item[$(1)$] there are finitely many indecomposable weak bialgebras $H_i \ (i=1,\ldots , k)$ such that 
$H=H_1\oplus \cdots \oplus H_k$. 
\item[$(2)$] If  indecomposable weak bialgebras $H_1, \ldots , H_k$ and $H_1^{\prime}, \ldots , H_l^{\prime}$ satisfy 
$$H_1\oplus \cdots \oplus H_k=H=H_1^{\prime}\oplus \cdots \oplus H_l^{\prime},$$
then $k=l$, and there is a permutation $\sigma \in \mathfrak{S}_k$ such that $H_i^{\prime}=H_{\sigma (i)}$ for all $i=1, \ldots , k$. 
\end{enumerate}
\end{thm}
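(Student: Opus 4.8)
The plan is to reduce everything to bookkeeping with idempotents. The basic dictionary is the following well-known description (cf.\ \cite{Wakui_ContempMath}): for a weak bialgebra $H$, to give a decomposition $H=H_1\oplus\cdots\oplus H_k$ into weak bialgebras is the same as to give a complete family $e_1,\dots,e_k$ of pairwise orthogonal idempotents of the center of the algebra $H$ with $e_1+\cdots+e_k=1$ and $\Delta(e_i)\in e_iH\otimes e_iH$ for every $i$; the correspondence is $H_i=e_iH=He_i$ and $e_i=1_{H_i}$. Indeed, if $H=\bigoplus H_i$ then each $H_i$ is a two-sided ideal of the algebra $H$ and a subcoalgebra of the coalgebra $H$, so $e_i:=1_{H_i}$ is central, the $e_i$ are orthogonal idempotents summing to $1$, and $\Delta(e_i)\in H_i\otimes H_i=e_iH\otimes e_iH$; conversely, given such a family, each $e_iH$ is a two-sided ideal with unit $e_i$ and, because $\Delta(e_ih)=\Delta(e_i)\Delta(h)\in e_iH\otimes e_iH$ by (WH1), a subcoalgebra, so it inherits a weak bialgebra structure and $H=\bigoplus e_iH$ is their direct sum in the sense above.

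Part (1) is then a straightforward induction on $\dim_{\boldsymbol k}H$: if $H$ is indecomposable take $k=1$; otherwise $H\cong A\oplus B$ with $A$, $B$ nonzero weak bialgebras of smaller dimension, and one applies the induction hypothesis to $A$ and to $B$.

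For Part (2) I would take the common refinement of the two given decompositions. Write $H=\bigoplus_{i=1}^k H_i=\bigoplus_{j=1}^l H_j'$ with all summands indecomposable, and put $e_i=1_{H_i}$, $f_j=1_{H_j'}$, $g_{ij}=e_if_j$. These are central idempotents, pairwise orthogonal, with $\sum_{i,j}g_{ij}=(\sum_i e_i)(\sum_j f_j)=1$, and, the idempotents being central, $g_{ij}H=e_iH\cap f_jH=H_i\cap H_j'$. The key computation is $\Delta(g_{ij})=\Delta(e_i)\Delta(f_j)\in(e_iH\otimes e_iH)(f_jH\otimes f_jH)=g_{ij}H\otimes g_{ij}H$, so by the dictionary each nonzero $g_{ij}H$ is a sub-weak bialgebra of $H$; moreover, since $e_i=\sum_j g_{ij}$ and $f_j=\sum_i g_{ij}$, the families $\{g_{ij}\}_j$ and $\{g_{ij}\}_i$ exhibit $H_i=\bigoplus_j g_{ij}H$ and $H_j'=\bigoplus_i g_{ij}H$ as direct sums of weak bialgebras. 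As $H_i$ is indecomposable exactly one $g_{ij}$ is nonzero; denoting its index by $\sigma(i)$ we get $e_if_{\sigma(i)}=e_i$, hence $H_i\subseteq H_{\sigma(i)}'$. Symmetrically there is a map $\tau$ with $H_j'\subseteq H_{\tau(j)}$. Then $H_i\subseteq H_{\sigma(i)}'\subseteq H_{\tau\sigma(i)}$, and since $H_i\neq0$ while distinct summands of $\bigoplus_m H_m$ intersect in $0$, we must have $\tau\sigma(i)=i$; likewise $\sigma\tau=\mathrm{id}$. Therefore $\sigma$ is a bijection, $k=l$, and $H_i=H_{\sigma(i)}'$ for every $i$; replacing $\sigma$ by $\sigma^{-1}$ gives the permutation of the statement.

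The one step that genuinely needs care is the common-refinement argument --- concretely, checking that the compatibility condition $\Delta(e)\in eH\otimes eH$ is preserved under the products $g_{ij}=e_if_j$, so that the $g_{ij}H$ really are weak bialgebras and not merely subspaces; this uses centrality of the idempotents together with (WH1), and it is where the coalgebra-direct-sum half of the definition of $\oplus$ is essential (finite-dimensionality is used only in Part (1), to make the induction terminate). I note that Part (2) can alternatively be obtained from Corollary~\ref{3-6}: a decomposition $H=\bigoplus H_i$ induces a decomposition of the $\boldsymbol k$-linear monoidal category $\mathbb{M}^H$ as a product of the $\mathbb{M}^{H_i}$ over the forgetful functors, and uniqueness of such a decomposition of $\mathbb{M}^H$ yields the result; the idempotent argument above is more direct.
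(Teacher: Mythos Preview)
Your argument is correct. The paper itself does not give a detailed proof but only remarks that the theorem follows from the existence and uniqueness of decompositions of finite-dimensional algebras into indecomposable ones (referring to \cite{Wakui_ContempMath} for details); your central-idempotent bookkeeping, with the key check that the condition $\Delta(e)\in eH\otimes eH$ is stable under products $g_{ij}=e_if_j$, is precisely how such a reduction is carried out, so this is essentially the same approach made explicit.
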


\par \medskip 
The above theorem can be proved by decomposability and uniqueness of finite-dimensional algebras into indecomposable ones (see \cite{Wakui_ContempMath} for detail). 

\par 
A $\boldsymbol{k}$-linear monoidal category $\mathcal{C}$ is called \textit{indecomposable} if there are no 
$\boldsymbol{k}$-linear monoidal categories $\mathcal{C}_1, \mathcal{C}_2$ such that $\mathcal{C}\simeq \mathcal{C}_1\times \mathcal{C}_2$ as $\boldsymbol{k}$-linear monoidal categories. 

\par \medskip 
\begin{prop}\label{4-2}
Let $H$ be a weak bialgebra over $\boldsymbol{k}$. 
If $H$ is decomposable, then 
\begin{enumerate}
\item[$(1)$] the left $H$-module category ${}_H\Mod$ and 
 the finite-dimensional left $H$-module category ${}_H\mathbb{M}$ are decomposable. 
\item[$(2)$] the right $H$-comodule category $\Mod^H$ and  
 the finite-dimensional right $H$-comodule category $\mathbb{M}^H$ are decomposable. 
\end{enumerate}
\end{prop}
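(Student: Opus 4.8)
The plan is to prove this directly, by showing that whenever $H\cong A\oplus B$ with $A,B$ nonzero weak bialgebras, the module (resp.\ comodule) category of $H$ splits as the product of those of $A$ and $B$ \emph{as $\boldsymbol{k}$-linear monoidal categories}. Recall from the construction of the direct sum $H=A\oplus B$ that, as an algebra, $H$ is the product $A\times B$, so the units $1_A,1_B$ of $A$ and $B$ are orthogonal central idempotents of $H$ with $1_A+1_B=1$; that, as a coalgebra, $H$ is the direct sum, so $\Delta_H(A)\subseteq A\otimes A$, $\Delta_H(B)\subseteq B\otimes B$ and $\varepsilon_H$ restricts to $\varepsilon_A$ on $A$ and to $\varepsilon_B$ on $B$; and that $H_t=A_t\oplus B_t$, $H_s=A_s\oplus B_s$ with $\varepsilon_t,\varepsilon_s$ acting blockwise, so in particular $1_A,1_B\in H_s\cap H_t$.

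For Part~(1), a left $H$-module $M$ decomposes by the central idempotents as $M=1_AM\oplus 1_BM$ with $1_AM$ a left $A$-module and $1_BM$ a left $B$-module, and $M\mapsto(1_AM,1_BM)$ is a $\boldsymbol{k}$-linear equivalence ${}_H\Mod\xrightarrow{\ \sim\ }{}_A\Mod\times{}_B\Mod$ restricting to the finite-dimensional subcategories. To promote this to a monoidal equivalence onto the product monoidal category, I would use that the tensor product of left $H$-modules has underlying space $\Delta(1)\cdot(M\otimes_{\boldsymbol{k}}N)$ and that $\Delta(1)=\Delta_A(1_A)+\Delta_B(1_B)$ with $\Delta_A(1_A)\in A\otimes A$, $\Delta_B(1_B)\in B\otimes B$; hence $M\otimes N$ decomposes as $\bigl(\Delta_A(1_A)\cdot(1_AM\otimes_{\boldsymbol{k}}1_AN)\bigr)\oplus\bigl(\Delta_B(1_B)\cdot(1_BM\otimes_{\boldsymbol{k}}1_BN)\bigr)$, which is exactly the tensor product formed in ${}_A\Mod$ together with the one formed in ${}_B\Mod$, and the associativity and unit constraints respect this splitting. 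Since the unit object $H_t=A_t\oplus B_t$ is carried to $(A_t,B_t)$, the unit of ${}_A\Mod\times{}_B\Mod$, this is a monoidal equivalence; as $A,B\ne 0$ neither factor is trivial, so ${}_H\Mod$, and likewise ${}_H\mathbb{M}$, is decomposable.

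For Part~(2) I would dualize. Let $g_A,g_B\in H^*$ be the functionals equal to $\varepsilon_A$ on $A$, to $\varepsilon_B$ on $B$, and to $0$ on the complementary summand; from $\Delta_H(A)\subseteq A\otimes A$ and $\Delta_H(B)\subseteq B\otimes B$ one gets $g_A+g_B=\varepsilon_H$ together with $g_A*g_A=g_A$, $g_B*g_B=g_B$, $g_A*g_B=0$. For a right $H$-comodule $(M,\rho)$ the maps $m\mapsto m_{(0)}g_X(m_{(1)})$ $(X=A,B)$ are then commuting orthogonal idempotents summing to $\mathrm{id}_M$, so $M=M_A\oplus M_B$ with $\rho(M_X)\subseteq M_X\otimes X$; thus $M_X$ is a right $X$-comodule and $M\mapsto(M_A,M_B)$ is a $\boldsymbol{k}$-linear equivalence $\Mod^H\xrightarrow{\ \sim\ }\Mod^A\times\Mod^B$ restricting to $\mathbb{M}^H\simeq\mathbb{M}^A\times\mathbb{M}^B$. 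For monoidality, by Lemma~\ref{2-5} the $(H_s,H_s)$-bimodule structure on a comodule is $y\cdot m=m_{(0)}\varepsilon(ym_{(1)})$, $m\cdot y=m_{(0)}\varepsilon(m_{(1)}y)$; since on $M_A$ one has $m_{(1)}\in A$ and $1_Ba=0$ for $a\in A$, the element $1_A$ acts as the identity on $M_A$ while $1_B$ acts as zero (and symmetrically on $M_B$). Hence $M_A\otimes_{H_s}N_B=0$, while the $H_s$-balancing on $M_A\otimes_{H_s}N_A$ reduces to $A_s$-balancing, so together with $\Delta_H(A)\subseteq A\otimes A$ the coaction $\rho(m\otimes_{H_s}n)=(m_{(0)}\otimes_{H_s}n_{(0)})\otimes m_{(1)}n_{(1)}$ exhibits $M\otimes_{H_s}N\cong(M_A\otimes_{A_s}N_A)\oplus(M_B\otimes_{B_s}N_B)$ as right $H$-comodules, which is the product tensor product; the unit $H_s=A_s\oplus B_s$ is carried to $(A_s,B_s)$. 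Thus $\Mod^H\simeq\Mod^A\times\Mod^B$ and $\mathbb{M}^H\simeq\mathbb{M}^A\times\mathbb{M}^B$ as $\boldsymbol{k}$-linear monoidal categories into nontrivial factors.

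The routine part is the $\boldsymbol{k}$-linear equivalence coming from the central idempotents of $H=A\times B$ for modules, and from the dual idempotents $g_A,g_B\in H^*$ for comodules. The step I expect to be the main obstacle is verifying that the \emph{truncated} tensor products — $\Delta(1)\cdot(M\otimes_{\boldsymbol{k}}N)$ on the module side and $M\otimes_{H_s}N$ on the comodule side — split along the \emph{same} block decomposition as the objects; concretely this forces one to check the blockwise behaviour of $\Delta_H$ and of the source and target subalgebras, and the vanishing $M_A\otimes_{H_s}N_B=0$, rather than merely the additive splitting of the underlying categories. Once these are in hand, matching the unit objects $H_t\leftrightarrow(A_t,B_t)$, $H_s\leftrightarrow(A_s,B_s)$ and the coherence constraints is straightforward.
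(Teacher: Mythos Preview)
Your proposal is correct and follows essentially the same approach as the paper. Both arguments use the orthogonal central idempotents $1_A,1_B$ to split modules via $X\mapsto(1_A\cdot X,\,1_B\cdot X)$, and for comodules your functionals $g_A,g_B$ are precisely the paper's $\varepsilon_A\circ\pi_A,\ \varepsilon_B\circ\pi_B$; the paper simply asserts that the monoidal structure maps are identities, whereas you spell out the vanishing $M_A\otimes_{H_s}N_B=0$ and the block splitting of the truncated tensor products, which is exactly the verification underlying the paper's claim.
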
 
\begin{proof}
Suppose that $H=A\oplus B$ for some weak bialgebras $A, B$ over $\boldsymbol{k}$. 
\par 
(1) The left $H$-module category ${}_H\Mod$ is equivalent to the Cartesian product ${}_A\Mod\times {}_B\Mod$ as $\boldsymbol{k}$-linear monoidal categories. 
An equivalence is given by the following monoidal functors, which are quasi-inverse each other:
\begin{align*}
(F, \phi ^F, \omega ^F) : & {}_H\Mod \longrightarrow {}_A\Mod\times {}_B\Mod,\\ 
F(X)&=(1_A\cdot X,\ 1_B\cdot X), \\ 
\phi^F_{X,Y}&=\mathrm{id}_{F(X)\circledast F(Y)}: F(X)\circledast F(Y) \longrightarrow F(X\circledast Y), \\ 
\omega ^F &=\mathrm{id}_{(A_t, B_t)} :  (A_t, B_t) \longrightarrow F(H_t),\\ 
(G, \phi ^G, \omega ^G) : & {}_A\Mod\times {}_B\Mod \longrightarrow {}_H\Mod,\\ 
G(U, V)&=U\times V,\\ 
\phi^G_{(U_1, V_1), (U_2, V_2)}&=\mathrm{id}_{G(U_1, V_1)\circledast G(U_2, V_2)}: G(U_1, V_1)\circledast G(U_2, V_2) \longrightarrow G\bigl( (U_1, V_1)\circledast (U_2, V_2)\bigr) ,\\ 
\omega ^G &: H_t \longrightarrow G(A_t, B_t)=A_t\times B_t,\ \ 
\omega ^G(z)=(1_A\cdot z,\ 1_B\cdot z)\qquad (z\in H_t). 
\end{align*}

By restricting the above equivalence ${}_H\Mod\simeq {}_A\Mod\times {}_B\Mod$ to finite dimension a $\boldsymbol{k}$-linear monoidal equivalence ${}_H\mathbb{M}\simeq {}_A\mathbb{M}\times {}_B\mathbb{M}$ is obtained. 
\par 
(2) The right $H$-comodule category $\Mod^H$ is equivalent to the Cartesian product $\Mod^A\times \Mod^B$ as $\boldsymbol{k}$-linear monoidal categories. 
An equivalence is given by the following monoidal functors, which are quasi-inverse each other: 
\begin{align*}
(F, \phi ^F, \omega ^F) : & \Mod^H \longrightarrow \Mod^A \times \Mod^B,\\ 
F\bigl( (X, \rho )\bigr) &=\Bigl( \bigl( (\varepsilon_A\circ \pi_A)\cdot X,\ (\mathrm{id}\otimes \pi _A)\circ \rho |_{(\varepsilon_A\circ \pi_A)\cdot X}\bigr) , \bigl( (\varepsilon_B\circ \pi_B)\cdot X,\ (\mathrm{id}\otimes \pi _B)\circ \rho |_{(\varepsilon_B\circ \pi_B)\cdot X}\bigr) \Bigr)\\ 
\phi^F_{X,Y}&=\mathrm{id}_{F(X)\circledast F(Y)}: F(X)\circledast F(Y) \longrightarrow F(X\circledast Y), \\ 
\omega ^F &=\mathrm{id}_{(A_s, B_s)} :  (A_s, B_s) \longrightarrow F(H_s),\\ 
(G, \phi ^G, \omega ^G) : & \Mod^A\times\Mod^B \longrightarrow \Mod^H,\\ 
G(U, V)&=U\times V,\\ 
\phi^G_{(U_1, V_1), (U_2, V_2)}&=\mathrm{id}_{G(U_1, V_1)\circledast G(U_2, V_2)}: G(U_1, V_1)\circledast G(U_2, V_2) \longrightarrow G\bigl( (U_1, V_1)\circledast (U_2, V_2)\bigr) ,\\ 
\omega ^G &: H_s \longrightarrow G(A_s, B_s)=A_s\times B_s,\ \ 
\omega ^G(y)=\bigl(\pi_A(y),\ \pi_B(y)\bigr) \qquad (y\in H_s), 
\end{align*}
where $\pi_A$ and $\pi_B$ are natural surjections associated with the direct sum decomposition $H=A\oplus B$. 
\par 
By restricting the above equivalence $\Mod^H\simeq \Mod^A\times \Mod^B$ to finite dimension a $\boldsymbol{k}$-linear monoidal equivalence $\mathbb{M}^H\simeq \mathbb{M}^A\times \mathbb{M}^B$ is obtained. 
\end{proof}

\par \medskip 
The converse of the above proposition is true. 
To prove it we need the following reconstruction theorem of bialgebras. 

\par \medskip 
\begin{thm}[{\bf Ulbrich~\cite{Ulbrich}, Schauenburg~\cite[Theorem 5.4]{Schauen2}}]\label{4-3}
Let $\mathcal{C}$ be a $\boldsymbol{k}$-linear monoidal category, and 
$\omega : \mathcal{C}\longrightarrow \text{Vect}_{\boldsymbol{k}}^{\text{f.d.}}$ be a faithful and exact $\boldsymbol{k}$-linear monoidal functor. 
Then, there are a bialgebra $B$ and a monoidal equivalence $F: \mathcal{C}\longrightarrow \mathbb{M}^B$ such that 
$U^B\circ F=\omega $, where $U^B: \mathbb{M}^B\longrightarrow \text{Vect}_{\boldsymbol{k}}^{\text{f.d.}}$ is the forgetful functor. 
\qed 
\end{thm}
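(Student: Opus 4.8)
The strategy is the classical Tannaka--Krein argument, organized in two stages: first reconstruct $B$ as a coalgebra together with an equivalence $\mathcal{C}\simeq\mathbb{M}^B$ lying over $\omega$, and then use the monoidal structure of $\mathcal{C}$ and of $\omega$ to upgrade $B$ to a bialgebra and $F$ to a monoidal equivalence. For the first stage, since $\omega$ is a faithful exact $\boldsymbol{k}$-linear functor with values in $\text{Vect}_{\boldsymbol{k}}^{\text{f.d.}}$, the coalgebra version of the reconstruction theorem --- of which Theorem~\ref{3-1} is the ``map'' counterpart --- produces a coalgebra $B$ and a $\boldsymbol{k}$-linear equivalence $F:\mathcal{C}\longrightarrow\mathbb{M}^B$ with $U^B\circ F=\omega$. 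Concretely one may take $B=\int^{X\in\mathcal{C}}\omega(X)^{\ast}\otimes\omega(X)$, the coend, whose comultiplication and counit are induced by the universal dinatural family $\{\,i_X:\omega(X)^{\ast}\otimes\omega(X)\longrightarrow B\,\}_X$ and whose coaction on $\omega(X)$ is the mate of $i_X$; exactness of $\omega$ is what forces $F$ to be essentially surjective. From here on I replace $\mathcal{C}$ by $\mathbb{M}^B$ along $F$, so that $\omega=U^B$ and $\mathbb{M}^B$ carries a $\boldsymbol{k}$-linear monoidal structure $(\tilde{\otimes},\tilde{I},a,l,r)$ for which $U^B$ is strong monoidal.

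Next I extract the multiplication. Taking $\text{Vect}_{\boldsymbol{k}}^{\text{f.d.}}$ to be strict and normalizing the coherence isomorphisms of $U^B$, I may assume that the underlying space of $M\tilde{\otimes}N$ is literally $M\otimes_{\boldsymbol{k}}N$, that $U^B(\tilde{I})=\boldsymbol{k}$, and that $U^B$ is strict monoidal. The $B$-coaction on $M\tilde{\otimes}N$ is then a map $M\otimes N\longrightarrow M\otimes N\otimes B$ natural in $M$ and $N$; running the argument of the proof of Theorem~\ref{3-1} on the regular comodules $C$ --- the finite-dimensional subcoalgebras of $B$ --- in each variable, and using the fundamental theorem for coalgebras, one obtains a unique $\boldsymbol{k}$-linear map $\mu:B\otimes B\longrightarrow B$ with
\[
\rho_{M\tilde{\otimes}N}(m\otimes n)=m_{(0)}\otimes n_{(0)}\otimes\mu(m_{(1)}\otimes n_{(1)}).
\]
Equivalently, $\mu$ is the map $B\otimes B\cong\int^{(X,Y)}\omega(X\otimes Y)^{\ast}\otimes\omega(X\otimes Y)\longrightarrow B$ induced by the wedge of $\omega$ at the objects $X\otimes Y$, and the unit $\eta:\boldsymbol{k}\longrightarrow B$ is the component $i_{\tilde{I}}:\boldsymbol{k}\cong\omega(\tilde{I})^{\ast}\otimes\omega(\tilde{I})\longrightarrow B$.

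It then remains to check that $(B,\mu,\eta,\Delta,\varepsilon)$ is a bialgebra and that $F$ is monoidal. That $\mu$ and $\eta$ are coalgebra maps --- equivalently, that $\Delta$ and $\varepsilon$ are multiplicative and unital --- follows from the fact that $\rho_{M\tilde{\otimes}N}$ above is a genuine $B$-coaction, again read off from the reconstruction-of-maps argument. Associativity and unitality of $(B,\mu,\eta)$ are forced by the constraints of $\mathcal{C}$: since $U^B$ is strict monoidal, $U^B(a_{M,N,L})$ is the identity of $M\otimes N\otimes L$, so $a_{M,N,L}$ is a $B$-comodule isomorphism which is the identity on underlying spaces, hence equals the identity; comparing the coactions on the two bracketings through $a_{M,N,L}=\text{id}$ yields $\mu\circ(\mu\otimes\text{id})=\mu\circ(\text{id}\otimes\mu)$, and $l,r$ likewise yield the unit axioms. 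With this bialgebra structure, the standard monoidal structure on $\mathbb{M}^B$ coincides with $\tilde{\otimes}$ by the very definition of $\mu$ and $\eta$, so $F$ is a monoidal equivalence satisfying $U^B\circ F=\omega$.

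The genuinely substantial points are the first stage and the normalization in the second: producing the coalgebra $B$ with an \emph{honest} equivalence $\mathcal{C}\simeq\mathbb{M}^B$ over $\omega$ (this is where faithfulness and exactness are really needed, and where any size or finiteness hypotheses on $\mathcal{C}$ enter), and then adjusting the structural isomorphisms of the strong monoidal functor $U^B$ so that the tensor-product coaction is governed by a single map $\mu:B\otimes B\to B$ and the associator becomes strictly the identity. Once that normalization is available, the bialgebra axioms and the monoidality of $F$ are a routine diagram chase.
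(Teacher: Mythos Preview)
The paper does not prove Theorem~\ref{4-3} at all: it is quoted from Ulbrich and Schauenburg and closed with a bare \qed, so there is no argument in the paper to compare yours against. Your sketch is the standard Tannaka--Krein coend reconstruction that those references carry out, and as an outline it is correct: build $B=\int^{X}\omega(X)^{\ast}\otimes\omega(X)$ as a coalgebra with its tautological coaction giving the equivalence $F$, then read the multiplication and unit off the monoidal structure of $\omega$, and finally deduce the bialgebra axioms from the coherence constraints of $\mathcal{C}$.

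Two small points worth tightening if you ever write this out in full. First, the isomorphism you invoke for $\mu$ should be
\[
B\otimes B\;\cong\;\int^{(X,Y)}\bigl(\omega(X)\otimes\omega(Y)\bigr)^{\ast}\otimes\bigl(\omega(X)\otimes\omega(Y)\bigr)
\]
via Fubini for coends, after which the monoidal constraint $\omega(X)\otimes\omega(Y)\cong\omega(X\tilde{\otimes}Y)$ and the universal wedge at $X\tilde{\otimes}Y$ produce $\mu$; your phrasing ``the wedge of $\omega$ at the objects $X\otimes Y$'' elides this. Second, the sentence ``exactness of $\omega$ is what forces $F$ to be essentially surjective'' is not quite the right attribution: faithfulness is what makes $F$ fully faithful, while exactness (together with values in finite-dimensional spaces) is used to guarantee that the coend exists and that every finite-dimensional $B$-comodule arises from $\mathcal{C}$. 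These are refinements of emphasis rather than genuine gaps.
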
 

\par \medskip 
Combining Theorems~\ref{3-5} and \ref{4-3} one can show the following.

\par \medskip 
\begin{thm}\label{4-4}
Let $H$ be a finite-dimensional weak bialgebra over $\boldsymbol{k}$. Then 
$H$ is indecomposable as a weak bialgebra if and only if the finite-dimensional left $H$-module category ${}_H\mathbb{M}$ is indecomposable as a $\boldsymbol{k}$-linear monoidal category. 
\end{thm}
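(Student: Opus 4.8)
The plan is to prove the two implications separately, using Proposition~\ref{4-2} for one and, after dualizing, the reconstruction results of Section~3 for the other.

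The implication ``${}_H\mathbb{M}$ indecomposable $\Rightarrow$ $H$ indecomposable'' is immediate, being the contrapositive of Proposition~\ref{4-2}(1): if $H\cong A\oplus B$ with $A,B$ weak bialgebras, then ${}_H\mathbb{M}\simeq {}_A\mathbb{M}\times {}_B\mathbb{M}$ as $\boldsymbol{k}$-linear monoidal categories. For the reverse implication I argue the contrapositive: starting from a decomposition of ${}_H\mathbb{M}$ as a $\boldsymbol{k}$-linear monoidal category, I produce a weak bialgebra decomposition of $H$. Since $H$ is finite-dimensional, left $H$-modules coincide with right comodules over the dual weak bialgebra $H^{\ast}$ (in a suitable convention; replacing a weak bialgebra by an opposite or co-opposite does not affect decomposability), and this identification is an equivalence ${}_H\mathbb{M}\simeq \mathbb{M}^{H^{\ast}}$ of $\boldsymbol{k}$-linear monoidal categories. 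Moreover dualization sends the weak bialgebra direct sum $A\oplus B$ to the weak bialgebra direct sum $A^{\ast}\oplus B^{\ast}$, so $H$ is decomposable iff $H^{\ast}$ is. Hence it is enough to show: for a finite-dimensional weak bialgebra $K$, if $\mathbb{M}^{K}$ is decomposable as a $\boldsymbol{k}$-linear monoidal category, then $K$ is decomposable.

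So suppose $\mathbb{M}^{K}\simeq \mathcal{C}_1\times \mathcal{C}_2$ with $\mathcal{C}_1,\mathcal{C}_2$ nonzero $\boldsymbol{k}$-linear monoidal categories. Forgetting the monoidal structure, such a splitting of the abelian category $\mathbb{M}^{K}$ is the same as a coalgebra decomposition $K=K_1\oplus K_2$ obtained by grouping the indecomposable direct summands of the coalgebra $K$, with $\mathcal{C}_i\simeq \mathbb{M}^{K_i}$ as $\boldsymbol{k}$-linear categories. Transporting the faithful exact $\boldsymbol{k}$-linear forgetful functor $U^{K}$ through the equivalence and restricting it to each factor, I would then invoke the Tannakian reconstruction of weak bialgebras — Theorem~\ref{4-3}, applied after passing to the source counital subalgebras and the bimodule categories ${}_{(\cdot)_s}\mathbb{M}_{(\cdot)_s}$ of Lemma~\ref{2-6} — to realize each $\mathcal{C}_i$, as a $\boldsymbol{k}$-linear monoidal category compatibly with its fibre functor, as $\mathbb{M}^{L_i}$ for a weak bialgebra structure $L_i$ on the coalgebra $K_i$. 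Putting $L:=L_1\oplus L_2$ and using $\mathbb{M}^{L}\simeq \mathbb{M}^{L_1}\times \mathbb{M}^{L_2}$, the resulting equivalence $F:\mathbb{M}^{K}\to \mathbb{M}^{L}$ is a strong $\boldsymbol{k}$-linear comonoidal equivalence with $U^{L}\circ F=U^{K}$. Corollary~\ref{3-6} then yields a weak bialgebra isomorphism $K\cong L=L_1\oplus L_2$, so $K$, and hence $H$, is decomposable.

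The main obstacle is precisely the reconciliation in the last paragraph. To apply Corollary~\ref{3-6} (hence Theorem~\ref{3-5}) one needs the fibre functors to agree strictly, $U^{L}\circ F=U^{K}$, whereas the unit objects of $\mathbb{M}^{K}$ and of the factors $\mathcal{C}_i$ are the source counital subalgebras, not $\boldsymbol{k}$; consequently $U^{K}$ is only a lax (separable Frobenius) monoidal functor, and Theorem~\ref{4-3} cannot be applied to these categories directly. Dealing with this requires running the whole argument through the monoidal forgetful functors $\hat{U}$ into the bimodule categories of Lemma~\ref{2-6}, identifying the unit object of $\mathcal{C}_i$ with the corresponding block of $K_s$, and checking that the tensor product transported from $\mathcal{C}_1\times \mathcal{C}_2$ is genuinely of weak-bialgebra type — namely $\otimes$ over the decomposed source subalgebra, with coaction obtained by multiplying the $K_i$-coefficients. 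This is exactly the point where Theorems~\ref{4-3} and \ref{3-5} are combined.
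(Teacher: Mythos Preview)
Your overall architecture---Proposition~\ref{4-2} for one implication, duality to $\mathbb{M}^{H^{\ast}}$ followed by reconstruction and Corollary~\ref{3-6} for the other---is exactly the paper's. The divergence is in how you handle the reconstruction of the factors, and there your route is both more complicated than the paper's and, as you yourself flag, incomplete.

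The paper does not pass through an intermediate coalgebra decomposition of $K$, nor does it attempt to reconstruct \emph{weak} bialgebra structures on the pieces. It simply restricts ${}_HU\circ F$ to each factor to obtain faithful exact $\boldsymbol{k}$-linear monoidal functors $\omega_i:\mathcal{C}_i\to\text{Vect}_{\boldsymbol{k}}^{\text{f.d.}}$ and applies Theorem~\ref{4-3} verbatim, producing \emph{ordinary} bialgebras $A,B$ with $\mathcal{C}_1\simeq\mathbb{M}^{A}$, $\mathcal{C}_2\simeq\mathbb{M}^{B}$. Then $\mathbb{M}^{H^{\ast}}\simeq\mathbb{M}^{A\oplus B}$ compatibly with the forgetful functors, and Corollary~\ref{3-6} gives $H^{\ast}\cong A\oplus B$, whence $H\cong A^{\ast}\oplus B^{\ast}$. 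Your final paragraph is not a proof but an inventory of obstacles: you note that the fibre functor to $\text{Vect}_{\boldsymbol{k}}^{\text{f.d.}}$ is not strong monoidal in the weak setting and propose routing everything through $\hat{U}$ and bimodule categories, yet you never carry this out, and you invoke Theorem~\ref{4-3} for ``weak bialgebras'' when it is stated only for ordinary ones. The paper sidesteps all of this by appealing to Theorem~\ref{4-3} directly; your scruple about the monoidality of the $\omega_i$ is reasonable, but the detour you propose is neither the paper's argument nor a completed one.
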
 
\begin{proof}
By the contraposition of Proposition~\ref{4-2} ``if part" holds. 
We will show that ``only if part". 
Suppose that $H$ is indecomposable whereas ${}_H\mathbb{M}$ is not. 
Then, there are $\boldsymbol{k}$-linear monoidal categories $\mathcal{C}_1, \mathcal{C}_2$ such that ${}_H\mathbb{M}\simeq \mathcal{C}_1\times \mathcal{C}_2$ as $\boldsymbol{k}$-linear monoidal categories. 
Let ${}_HU: {}_H\mathbb{M}\longrightarrow \text{Vect}_{\boldsymbol{k}}^{\text{f.d.}}$ be the forgetful functor, and 
$F: \mathcal{C}_1\times \mathcal{C}_2\longrightarrow {}_H\mathbb{M}$ be a $\boldsymbol{k}$-linear monoidal category equivalence. 
Since the two $\boldsymbol{k}$-linear monoidal functors 
\begin{align*}
\omega _1 :  & \ \mathcal{C}_1 \cong \mathcal{C}_1\times 0 \xrightarrow{\ \ F\ \ }  {}_H\mathbb{M}\xrightarrow{\ \ {}_HU\ \ } \text{Vect}_{\boldsymbol{k}}^{\text{f.d.}},\\ 
\omega _2 :  & \ \mathcal{C}_2 \cong 0\times  \mathcal{C}_2 \xrightarrow{\ \ F\ \ }  {}_H\mathbb{M}\xrightarrow{\ \ {}_HU\ \ } \text{Vect}_{\boldsymbol{k}}^{\text{f.d.}}
\end{align*}
are faithful and exact, there are bialgebras $A, B$ and $\boldsymbol{k}$-linear monoidal equivalences $G_1: \mathcal{C}_1\simeq \mathbb{M}^{A},\ G_2: \mathcal{C}_2\simeq \mathbb{M}^{B}$ such that $U^A\circ G_1=\omega _1,\ U^B\circ G_2=\omega _2$ by Theorem~\ref{4-3}. 
Then 
$$\mathcal{C}_1\times \mathcal{C}_2\simeq \mathbb{M}^{A}\times \mathbb{M}^{B}\cong \mathbb{M}^{A\oplus B}$$
as $\boldsymbol{k}$-linear monoidal categories. 
Thus we have $\boldsymbol{k}$-linear monoidal equivalence $G: \mathbb{M}^{H^{\ast}}\longrightarrow \mathbb{M}^{A\oplus B}$. 
This equivalence satisfies $U^{A\oplus B}\circ G=U^{H^{\ast}}$. 
So, by Corollary~\ref{3-6}, there is a weak bialgebra isomorphism  $g: A\oplus B\longrightarrow H^{\ast}$. 
Therefore, $H\cong H^{\ast\ast}\cong (A\oplus B)^{\ast}\cong A^{\ast}\oplus B^{\ast}$. 
This contradicts the fact that $H$ is indecomposable as a weak bialgebra. 
\end{proof}


\begin{thebibliography}{99}

\bibitem{BNS}
{\sc G.\,B\"{o}hm, F.\,Nill and K.\,Szlach\'{a}nyi}, 
\textit{Weak Hopf algebras I. Integral theory and $C^{\ast}$-structure}, 
J. Algebra {\bf 221} (1999), 385--438.

\par \smallskip \noindent 
\bibitem{BV}
{\sc A.\,Bruguieres and A.\,Virelizier}, 
\textit{Hopf monads}, 
Adv. Math. {\bf 215} (2007), 679--733. 

\par \smallskip \noindent 
\bibitem{Chikhladze}
{\sc D.\,Chikhladze}, 
\textit{The Tannaka representation theorem for separable Frobenius functors}, 
Algebr. Represent. Theor. {\bf 15} (2012), 1205--1213. 

\par \smallskip \noindent 
\bibitem{Deligne}
{\sc P.\,Deligne}, 
\textit{Cat\'{e}gories Tannaki\`{e}nnes}, 
In: \textit{The Grothendieck Festschrift vol.2}, Progr. Math. {\bf 87}, 
edited by P.Cartier, L.Illusie, 
N.M.Katz, G.Laumon, Y.Manin and K.A.Ribet, 
pp.111--195, Birkh\"{a}uiser, Boston, 1991. 

\par \smallskip \noindent 
\bibitem{ENO}
{\sc P.\,Etingof, D.\,Nikshych and V.\,Ostrik}, 
\textit{On fusion categories}, 
Ann. of Math. {\bf 162}, (2005), 581--642.

\par \smallskip \noindent 
\bibitem{Franco}
{\sc I.L.\,Franco}, 
\textit{Topics in category theory: Hopf algebras}, 
a lecture note, noted by D.\,Mehrle, at Cambridge University, 2015.  \newline 
http://pi.math.cornell.edu/$\sim$dmehrle/notes/partiii/hopfalg\underline{\ }partiii\underline{\ }notes.pdf

\par \smallskip \noindent 
\bibitem{H-Oldenburg}
{\sc R.\,H\"{a}ring-Oldenburg}, 
\textit{Reconstruction of weak quasi-Hopf algebras}, 
J. Algebra {\bf 194}, (1997), 14--35. 

\par \smallskip \noindent 
\bibitem{MacLane_book}
{\sc S.\,MacLane}, 
\textit{Categories for the working mathematician},
Grad.Texts Math. {\bf 5}, Springer, New York, 1971, Second edition, Springer-Verlag, New York, 1998. 

\par \smallskip \noindent 
\bibitem{Majid}
{\sc S.\,Maijd}, 
\textit{Tannaka-Krein theorem for quasi-Hopf algebras and other results}, 
in \lq\lq Deformation theory and quantum groups with applications to mathematical physics (Amherst, MA,
1990)", Contemp. Math. {\bf 134}, 219--232, Amer. Math. Soc., Providence, RI, 1992.

\par \smallskip \noindent 
\bibitem{Majid_book}
{\sc S.\,Maijd}, 
\textit{Foundations of quantum group theory}, 
Cambridge University Press, 1995.

\par \smallskip \noindent 
\bibitem{McCrudy}
{\sc M.\,B.\,McCrudy}
\textit{Graphical methods for Tannaka duality of weak bialgebras and weak Hopf algebras}, 
Theory and Appl. Cat. {\bf 26} (2012), 233--280. 

\par \smallskip \noindent 
\bibitem{Montgomery_book}
{S.~Montgomery}, 
{\itshape Hopf algebras and their action on rings}, C.B.M.S.{\bfseries 82}, 
American Mathematical Society, 1993. 

\par \smallskip \noindent 
\bibitem{NTV}
{\sc D.\,Nikshych, V.\,Turaev and L.\,Vainerman}, 
\textit{Invariants of knots and $3$-manifolds from finite quantum groupoids},
Top. Appl. {\bf 127} (2003), 91--123. 

\par \smallskip \noindent 
\bibitem{Nill}
{\sc F.\,Nill},
\textit{Axioms for weak bialgebras}, 
arXiv:math. 9805104v1, 1998.

\par \smallskip \noindent 
\bibitem{Schauen_Tannakian}
{\sc P.\,Schauenburg}, 
\textit{Tannaka duality for arbitrary Hopf algebras}, 
Algebra Berichte {\bf 66}, Verlag Reinhard Fisher, Munich, 1992. 

\par \smallskip \noindent 
\bibitem{Schauen}
{\sc P.\,Schauenburg}, 
\textit{Weak Hopf algebras and quantum groupoids}, 
Banach Center Publ. {\bf 61} (2003), 171--188. 

\par \smallskip \noindent 
\bibitem{Schauen2}
{\sc P.\,Schauenburg}, 
\textit{Hopf bigalois extensions}, 
Comm. Algebra {\bf 24} (1996), 3797--3825. 

\par \smallskip \noindent 
\bibitem{Szlach05}
{\sc K.\,Szlach\'{a}nyi}, 
\textit{Adjointable monoidal functors and quantum groupoids}, 
In: \lq\lq Hopf algebras in noncommutative geometry and physics", Lecture Notes in
Pure and Appl. Math. {\bf 239}, 291--307, Dekker, New York, 2005. 

\par \smallskip \noindent 
\bibitem{Ulbrich}
{\sc K.-H.\,Ulbrich}, 
\textit{On Hopf algebras and rigid monoidal categories}, 
Israel J. Math. {\bf 72} (1990), 252--256.

\par \smallskip \noindent 
\bibitem{Vercruysse}
{\sc J.\,Vercruysse}, 
\textit{Hopf algebras--Variant notations and reconstruction theorem}, 
arXiv:1202.3613v1, 2012. 

\par \smallskip \noindent 
\bibitem{Wakui_ContempMath}
{\sc M.\,Wakui}, 
\textit{Indecomposability of weak Hopf algebras}, 
submitted, preprint, 2020. 


\end{thebibliography}
\end{document}